\theoremstyle{plain}
\newtheorem{theorem}{Theorem}
\newtheorem{proposition}{Proposition}
\theoremstyle{remark}
\newtheorem{example}{Example}
\newtheorem{remark}{Remark}
\newtheorem{definition}{Definition}
\newcommand{\Ex}{\mathbf{E}}
\newcommand{\E}{\mathbf{E}}
\newcommand{\p}{\mathbf{P}}
\newcommand{\V}{\mathbf{Var}}
\newcommand{\R}{\mathbb{R}}
\newcommand{\mat}{\mathbb}
\newcommand{\A}{\mathcal{A}}
\newcommand{\vect}{\operatorname{vec}}
\begin{document}

\begin{frontmatter}
\title{Design of $c$-Optimal Experiments for High dimensional Linear Models}
\runtitle{High-dimensional $c$-Optimal Designs}

\begin{aug}
\author[]{\fnms{Hamid} \snm{Eftekhari}\ead[label=e1]{hamidef@umich.edu}},
\author[]{\fnms{Moulinath} \snm{Banerjee}\ead[label=e2]{moulib@umich.edu}}\and
\author[]{\fnms{Ya'acov} \snm{Ritov}\ead[label=e3]{yritov@umich.edu}}
\address[]{Department of Statistics, University of Michigan}

\address[]{\printead{e1}, \printead{e2}, \printead{e3}}
\end{aug}

\begin{abstract}
We study random designs that minimize the asymptotic variance of a de-biased lasso estimator when a large pool of unlabeled data is available but measuring the corresponding responses is costly. The optimal sampling distribution arises as the solution of a semidefinite program. The improvements in efficiency that result from these optimal designs are demonstrated via simulation experiments.
\end{abstract}

\begin{keyword}[class=MSC2020]
\kwd[Primary ]{62K05}
\kwd[; secondary ]{62J07}
\end{}

\begin{keyword}
\kwd{Optimal Design}
\kwd{Inference}
\kwd{Sparsity}
\end{keyword}

\end{frontmatter}


\section{Introduction}
Optimal design of experiments is a statistical framework for improving the efficiency of experiments with wide-ranging applications in science and industry. In the case of regression, given the covariate data ($x_i$'s) and a fixed sample size ($n$), the goal is to measure the responses ($y_i$'s) only for those $x_i$'s that result in the most accurate estimate of the relationship between the covariates and the response. This can result in significant savings in experiment time and/or monetary expenses associated with the experiment. 

The relationship between the response ($y$) and covariates ($x$) is often modeled as a linear model:
\begin{align*}
y = \langle x, \beta^\star \rangle + \varepsilon,
\end{align*}
where $\beta^\star \in \mathbf{R}^p$ is the true regression parameter and $\varepsilon$ denotes a mean-zero noise term. The target of estimation is a linear combination $\langle c, \beta^\star \rangle$ of the regression parameter $\beta^\star$. In the classical framework, a $c$-optimal design is a design (i.e. a distribution on the covariate data $(x_i)_i$) that minimizes the variance of the ordinary least squares (OLS) estimate of $\langle c, \beta^\star \rangle$.

The optimal design problem has been thoroughly investigated in the case of low-dimensional (fixed dimension, increasing number of observations) linear regression. Various optimality criteria, usually defined as functionals of the information matrix, have been considered and their relative merits are studied in the literature. \cite{pukelsheim2006optimal} provides an excellent introduction to the theory of optimal design. More relevant to our work is the literature on $c$-optimal design, pioneered by the seminal work of \cite{elfving1952optimum}, who provided an elegant geometric solution for the (approximate) $c$-optimal design problem which can also be cast as a linear program admitting an efficient solution. Variations and extensions of Elfving's theorem have since been studied. \cite{chernoff1953locally} generalized Elfving's result beyond the regression framework by introducing a local and asymptotic notion of optimality based on the information matrix. \cite{sagnol2011computing} studied $c$-optimality for multi-response regression and showed that it can be formulated as a second-order cone programming (SOCP) problem. \cite{dette1993elfving} considered a more general version of the $c$-optimality criterion that is robust across classes of models and proved a generalization of Elfving's result for this criterion. $c$-optimal designs have also been studied for models with correlated \citep{rodriguez2017computation} and heteroscedastic \citep{dette2009geometric} error terms.

In the high-dimensional regime, where the dimension of covariates is larger than the sample size, consistent estimation and inference are hopeless without further structural assumptions, the most well-studied being sparsity of $\beta^\star$. For estimation in sparse linear models, the $\ell_1$-regularized least squares (also known as the lasso) estimator has been proposed \citep{tibshirani1996regression} and shown to enjoy strong theoretical guarantees \citep{bickel2009simultaneous}. More recently, the de-biasing techniques have been introduced for correcting the bias of the lasso estimator, thereby providing $\sqrt{n}$-consistent and asymptotically normal estimates of the coordinates of $\beta^\star$ and allowing the construction of confidence intervals for its finite-dimensional coordinates \citep{vandegeer2014,zhang2014confidence,javanmard2014confidence}. Variations of the de-biased lasso estimator have also been devleoped for estimating linear combinations $\langle c, \beta^\star \rangle$ of $\beta^\star$ \citep{cai2017confidence,cai2019individualized}.

However, the optimal design problem for inference in high-dimensional linear models {\it has received scant attention}. \cite{seeger2008bayesian} studied sequential design for maximizing information gain in sparse linear models in a Bayesian framework. \cite{ravi2016experimental} introduced $D$-optimal designs for sparse linear models that combine the classical $D$-optimiality criterion applied to a perturbation of the information matrix and a term that penalizes the $l_2$ distance between the top eigenvectors of the full and reduced Hessians. \cite{Deng_thelasso} used nearly orthogonal Latin hypercube designs to improve variable selection using the lasso. More recently, \cite{huang2020optimal} extended Keifer's notion of $\Phi_l$-optimality to one applicable for the covariance matrix of the de-biased lasso and developed an algorithm for finding local optima of the resulting non-convex optimization problem.

\subsection{Contributions}\label{subsec:contributions}
Targeted inference on pre-selected parameters of high-dimensional models is of great interest in a variety of applications but has not been addressed in the existing experimental design literature. Our work bridges this critical gap between classical optimal design theory and frequentist inference in high-dimensional linear models. To our knowledge, this is the first rigorous result on this topic. Our contributions are articulated below:
\begin{enumerate}
\item We extend the standard convergence guarantees for the de-biased lasso beyond sub-Gaussian designs to random vectors with bounded entries using Poisson sampling and a novel proof technique. In the extant literature on the de-biased lasso, it is typically assumed that the rows of the design matrix are sub-Gaussian random vectors. While this assumption is convenient for concentration arguments, it is too restrictive for random vectors taking values in finite sets as are commonly encountered in the design problem. To appreciate that this is more than a mere technicality, see for example \citep[Exercise 3.4.5]{vershynin2018high} showing that if $X$ is an isotropic random vector supported on a finite set $S \subset \mathbf{R}^p$ and if $\| X \|_{\psi_2} \leq M$ for a bound $M$ not depending on $p$, then we must have $|S| > e^{cp}$ for a constant $c > 0$. In words, finitely-supported isotropic sub-Gaussian random vectors with bounded sub-Gaussian norms have supports that are \textit{exponentially large} in the dimension of the vector. Clearly this rules out many interesting design problems.

\item We introduce the notion of a constrained $c$-optimal design that is a natural extension of the classical $c$-optimality criterion suitable for the high-dimensional setting. In the low-dimensional regime, the OLS estimate is unbiased and the $c$-optimality criterion solely focuses on minimizing the variance of the OLS estimate of $\langle c, \beta^\star \rangle$. In the high-dimensional setting, the de-biased lasso estimate is not unbiased for finite samples (even though the bias is asymptotically $o(1/\sqrt{n})$ assuming the RE condition), and one needs to control the bias when minimizing the variance. This naturally leads to semidefinite constraints on the covariance matrix of the design distribution. Furthermore, we illustrate through an example that the de-biased lasso can have an arbitrarily large bias (with non-vanishing probability) if one minimizes the $c$-optimality criterion without regard for controlling the bias.
\end{enumerate}

\subsection{Applications}

Examples of applications for $c$-optimal designs in high-dimensional linear models include:
\begin{itemize}
\item \textbf{Detection of weak signals in sparse magnetic resonance images.} Due to physical limitations, measurements in MRI are time-consuming and there is substantial interest in reducing the number of measurements while preserving the reconstruction accuracy of the images \citep{lustig2007sparse}. In this application, optimal designs allow more efficient inference on, say, the average intensity of regions of interest (ROI) of the object under study (for example, the suspected location of a brain tumor), which enables detection of weak signals with fewer measurements than a uniform design. This example is described in detail in Section \ref{sec:application}.

\item \textbf{Improving the efficiency of clinical trials.} The amount and precision of medical data on individuals has been growing at an increasing speed in the past decades. On the other hand, medical experiments continue to be prohibitively expensive. Optimal designs can be used to improve the efficiency of experiments, thus leading to significant cost-savings \citep{atkinson1996usefulness,weng2015optimizing}. Whenever high-dimensional data is available on the {\it potential} subjects for an experiment, our results can improve the efficiency of inference on the treatment effects under study. For example, suppose that the genetic profile and medical history (high-dimensional covariates of dimension $p$) of a large number ($N \geq p$) of potential subjects is available to an investigator who is interested in the effect of an expensive drug and suppose that the treatment is pre-assigned (e.g. via randomization) and encoded as the first covariate. To save costs, it is desired to to have as few samples as possible ($n \ll p$), leading to a high-dimensional estimation problem. The question of which individuals to enroll in the experiment (based on their medical profile) leads naturally to a $c$-optimal design problem, where $c = \mathrm{e}_1$.

A similar application area is in controlled experiments on the web, where users or clients are directed to different versions of a product or service in order to measure the effect of an intervention on a metric of interest such as revenue or user engagement \citep{kohavi2009controlled}. Online businesses and service providers often have large amounts of data on their users (their personal information, browsing history, preferences, purchase history, etc.) that together with the intervention of interest can be encoded into high-dimensional covariates and which can be used to decide which users are enrolled into the aforementioned experiments.

\end{itemize}

\subsection{Notation and Definitions}
For any natural number $q$ we set $[q] := \{ 0, 1, \dots, q \}$. In this work $p$ is the dimension of covariates, $N$ is the total number of available design points, and $n$ denotes the sample size for which responses have been observed, so that in general we have $n \leq p \leq N$. Lower-case $x_i \in \R^p$ is used to denote potential covariate data available for sampling; the set $(x_i)_1^N$ of all these points is called the experimental domain. Upper-case $(X_i)_1^n$ denotes a sample from $(x_i)_1^N$ for which responses $(y_i)_1^n$ have been observed. The $n \times p$ matrix with $X_i^T$ in its $i$th row is denoted by $\mat{X}$ and we let $Y = (y_1, \dots, y_n)^T$.

The standard inner product in $\R^p$ is denoted by $\langle \cdot, \cdot \rangle$, whereas the $\ell_q$-norm is defined by $\| a \|_q = \sqrt[q]{\sum_i |a_i|^q}$ for any $q \geq 1$. The $j$-th element of the standard basis of $\R^p$ is denoted by $\mathrm{e}_j$. 

For a random variable or vector $Z$ we write $\E Z$ for the expected value of $Z$, where as $\p(E)$ is used to denote the probability of an event $E$. For a random variable $Z$ the sub-Gaussian (respectively, sub-exponential) norm is defined by $\| Z \|_{\psi_q} := \inf \{ t > 0 :\, \E \exp( |Z|^q / t^q ) < 2 \}$ with $q = 2$ (respectively,  $q = 1$).

For a positive semidefinite matrix $\Sigma$, we use $\lambda_{\min}(\Sigma)$ and $\lambda_{\max}(\Sigma)$ to denote its smallest and largest eigenvalues. For two matrices $A$ and $B$, the relation $A \preccurlyeq B$ means $B - A$ is positive semidefinite. For a matrix $\Sigma$ we write $\Sigma^+$ for its pseudo-inverse. 

For two sequence $a_n, b_n$ we write $a_n \lesssim b_n$ to mean that there exists an absolute constant $C>0$ (that does not depend on $n,p,N$) such that $a_n \leq C b_n$ for all $n \geq 1$. The reverse inequality $a_n \gtrsim b_n$ means $b_n \lesssim a_n$, while $a_n \asymp b_n$ means both $a_n \lesssim b_n$ and $b_n \lesssim a_n$ hold.

\subsection{Problem Formulation}
We assume that a large pool $\mathcal{X} = (x_{i})_{i=1}^N$ of covariate data (called the experimental domain) is available for which no responses have been observed yet. Upon sampling an $x \in (x_{i})_{i=1}^N$,  the experimenter observes the corresponding response, which we assume follows a linear model
\begin{align}\label{model1}
y = \langle x, \beta^\star \rangle + \varepsilon \tag{Model 1}
\end{align}
where $\beta^\star\in \R^p$ is an unknown parameter and $\varepsilon$ is sub-Gaussian noise independent of the choice of $x$. Crucially, we consider the case where the total number of observed responses is $n < p$, resulting in a high-dimensional inference problem. 

We assume that the target of inference is a linear combination $\langle c, \beta^\star \rangle = \sum_{i=1}^p c_i \beta_i^\star$ of $\beta^\star$, where $c$ is chosen apriori by the experimenter. For example, with $c = \mathrm{e}_j$ (the $j$th standard basis element) the aim is to conduct inference on $\beta_j^\star = \mathrm{e}_j^T \beta^\star$. Another example is the contrast $c = \mathrm{e}_i - \mathrm{e}_j$ where the goal is to conduct inference on $\beta_i^\star - \beta_j^\star$, the difference between effects of the $i$th and $j$th covariates.


We assume throughout that $N \geq p$, as it is crucial for our results that the population covariance matrix is non-singular, so that the restricted eigenvalue condition can be guaranteed for sample design matrices. In the $N < p$ case, while the restricted value condition might hold for population or sample covariance matrices, it is well-known that these conditions are NP-hard to check \citep{dobriban2016regularity}, and so equally hard to enforce on designs. 

Before presenting our results for high-dimensional models, we review the $c$-optimality criterion in the low-dimensional setting.

\subsection{$c$-optimality in low dimensions}
Suppose that $n > p$, and we are given a fixed sample $(X_i, y_i)_{i=1}^n$. Assuming that $c$ belongs to $\operatorname{span}(X_1, \dots, X_n)$, by the Gauss-Markov theorem, the best linear unbiased estimate of $c^T \beta^\star$ is $c^T \hat{\beta}^{OLS}$ where the ordinary least squares (OLS) estimate of $\beta^\star$ is defined by
$\hat{\beta}^{\mathrm{OLS}} := (X^T X)^+ X^T y.$
Recall that $c^T \hat{\beta}^{\mathrm{OLS}}$ is unbiased and has variance equal to
$\sigma^2 \cdot c^T (X^T X)^+ c$.
The goal in $c$-optimal design of experiments is to choose $X_1, \dots, X_n$ (among $(x_1, \dots, x_N)$) in such a way that the above variance is minimized. Letting $N_i\in [n]:= \{ 0, 1, \dots, n \}$ be the number of times $x_i$ is repeated in the design, the exact $c$-optimality problem can be written as

\begin{align*}
	\mathbf{P0 : }\min_{(N_i)_1^N }  \quad &c^T \Sigma^+ c\\[-10pt]
	\text{s.t.}  \quad &\Sigma = \frac{1}{n} \sum_{i=1}^N N_i x_i x_i^T\\[-10pt]
	\quad &\sum_{i=1}^N N_i = n , \quad N_i \in [n].
\end{align*}
 Since this {\it exact} design problem (that is, with the integer constraints $N_i \in [n]$) turns out to be computationally hard to solve (more specifically, it is NP-complete \citep{vcerny2012two}), it is often relaxed to the so-called approximate design problem formulated as below:
\begin{align*}
	\mathbf{P1 : }\min_{w \in \R^{N}}  \quad &c^T \Sigma^+ c\\[-10pt]
	\text{s.t.}  \quad &\Sigma = \sum_{i=1}^N w_i x_i x_i^T\\[-10pt]
	\quad &\sum_{i=1}^N w_i = 1 ,\quad w \geq 0.
\end{align*}
Note that given an approximate optimal design $(w_i^\star)_1^N$, it is likely that $n w_i^\star$ is not an integer. Thus one may have to resort to rounding techniques to obtain an exact design from the approximate optimal design \citep{pukelsheim1992efficient}. Alternatively, one can use randomization where $\{X_1, \dots, X_n\}$ is an iid sample from $(x_i)_1^N$ with probabilities $(w_i^\star)_1^N$. We take the latter approach in this work since randomization also allows the derivation of high-probability bounds on the bias of the de-biased lasso. 

It follows from Elfving's theorem \citep{elfving1952optimum} that the above problem is equivalent to the following $\ell_1$ minimization problem:
\begin{align}\label{linear_program}
\mathbf{P1^\prime : }\min_{b \in \R^N} \quad & \| b \|_1 \\[-10pt]
\text{s.t.} \quad & c = \sum_{i=1}^N b_i x_i
\end{align}
If $b^\star$ is an optimal solution of $\mathbf{P1^\prime}$, then $w^\star = b^\star / \| b^\star \|_1$ is an optimal solution for $\mathbf{P1}$. It can be shown that $w^\star$ has at most $p$ non-zero entries.

\section{Debiased Inference of Parameters}
Various methods have been proposed for estimation and inference of linear functionals $\gamma := \langle c, \beta\rangle$ in the high-dimensional inference literature. \citet{cai2017confidence} provided minimax optimal rates for the lengths of confidence intervals depending on the sparsity structure of $c$. \citet{javanmard2017flexible} studied hypothesis testing for general null hypothesis of the form $\beta \in \Omega_0$ for arbitrary $\Omega_0$. \cite{cai2019individualized} studied inference of individualized treatment effects $\langle c, \beta_1 - \beta_2 \rangle$ for a two-sample problem and linear functionals $\langle c, \beta \rangle$ for the one-sample problem. The common theme among all these works is the use of debiasing procedures, where one starts with a biased estimate (typically variants of the lasso) and then uses a projection method to correct its bias. We describe a variant of the method proposed by \citet{cai2019individualized} in the following.

Suppose that $(Y_i, X_i)_{i=1}$ is an i.i.d. sample drawn according to $P$. To obtain an initial estimate for $\beta^\star$, use the Lasso estimator defined by

\begin{align}\label{lasso_def}
\hat{\beta} := \arg \min_{\beta' \in \R^p } \left\{ L_\lambda(\beta') := \frac{ \|Y -  \mat{X}\beta'\|_2^2}{2n} + \lambda \sum_{j=1}^p \widehat{W}_j |\beta_j'| \right\}.
\end{align}
where $\widehat{W}_j = \sqrt{n^{-1} \sum_{i} X_{ij}^2}$ and with the tuning parameter of order\footnote{This is a theoretical value for the tuning parameter since $\sigma_\varepsilon$ is typically unknown. In practice one can use an estimate of $\sigma_\varepsilon$ or use cross-validation.} $\lambda \asymp \sigma_\varepsilon\sqrt{\frac{ \log p}{n}}$.

In order to correct the bias of $\hat{\beta}$, \cite{cai2019individualized} propose to use the following estimator
\begin{align*}
\hat{\gamma} = \langle c, \hat{\beta} \rangle + \frac{1}{n} \hat{u}^T \mat{X}^T(Y - \mat{X} \hat{\beta}), 
\end{align*}
where
\begin{align*}
 \hat{u} := \arg \min_{u} \quad &{u^T \hat{\Sigma} u}\\
 \text{s.t.}  \quad &\| \hat{\Sigma}u - c \|_\infty \leq \| c\|_2 \lambda,\\
 			 & | c^T \hat{\Sigma}u - \|c\|_2^2 | \leq \| c\|_2^2\lambda. 
\end{align*}
The above minimization problem effectively estimates $u = \Sigma^{-1}c$ in the typical setting where $\Sigma$ is unknown. In our setting, however, the population of covariates is fully known (after determining a design) and thus one can directly use $u = \Sigma^{-1} c$ in the de-biasing procedure. Thus we will be using the de-biased estimator
\begin{align}\label{deb_lasso}
\hat{\gamma} := \langle c, \hat{\beta} \rangle + \frac{1}{n} u^T \mat{X}^T(Y - \mat{X} \hat{\beta}),
\end{align}
with $u = \Sigma^{-1}c$. 

Before we state our main theorem regarding the consistency and asymptotic normality of $\hat{\gamma}$ we describe and motivate the Poisson sampling scheme used in our work.\newline

\noindent\textbf{Poisson Sampling. }
Suppose that $w=(w_i)_1^N$ is a probability distribution on $(x_i)_1^N$. Given an {\it iid} sample $X_1, \dots, X_n$ according to probabilities $w$, define $\tilde{N}_i$ to be the number of times $x_i$ is repeated among $X_1, \dots, X_n$:
\begin{align*}
\tilde{N}_i := \left| \{ k \in \{ 1, \dots, n\} : X_k = x_i \} \right|
\end{align*}
Then random objects such as $\hat{\Sigma} = n^{-1} \sum_1^n X_i X_i^T$ that appear in our theoretical analysis can be viewed as functions of the multinomial random variables $(N_i)_1^N$ as
\begin{align*}
\hat{\Sigma} = \frac{1}{n} \sum_{i=1}^N \tilde{N}_i \, x_i x_i^T.
\end{align*}

This shifts the focus from high-dimensional random vectors $(X_i)_1^n$ to random variables $(\tilde{N}_i)_1^N$ which are more amenable to theoretical analysis. Studying the concentration properties of $\hat{\Sigma}$ therefore necessitates dealing with the $\tilde{N}_i$'s. However, the dependence among the $\tilde{N}_i$'s leads to difficulties with concentration arguments which are typically based on independence (note that under iid sampling, the distribution of $(\tilde{N}_i)_1^N$ is multinomial with probabilities $(w_i)_1^N$ and sum equal to $n$). In order to use standard concentration results based on independence, it is useful to break this dependence among the $
\tilde{N}_i$'s, which can be achieved by using Poisson sampling as follows: Let $(N_i)_1^N$ be independent random variables with $N_i \sim \text{Poisson}(n w_i)$ and set $K = \sum_{i=1}^N N_i$. A sample $X_1, \dots, X_K$ is then created where $x_j$ appears $N_j$ times in this sample. It is easy to see that:
\begin{enumerate}
    \item The total number of samples drawn is close to $n$ (relative to $n$):
    \begin{align*}
       \frac{K}{n} \rightarrow_p 1 \text{ as } n \rightarrow \infty.
    \end{align*}
    \item Conditioned on $K = k$, the distribution of $(N_i)_{i=1}^N$ is multinomial with parameters $\left(k, (w_i)_1^N  \right)$.
\end{enumerate}
Thus the sample obtained from Poisson sampling is similar to sampling with replacement according to $w$. This Poisson sampling scheme precludes the need for sub-Gaussianity of the vectors $X_i$.

The following theorem describes the asymptotic distribution of the de-biased estimator $\hat{\gamma}$ on data obtained using Poisson sampling. The theorem is similar in spirit to the results of \cite{cai2019individualized}, \cite{javanmard2017flexible}, with the main difference being that it does not assume the covariates are sub-Gaussian vectors. Recall that as discussed in subsection \ref{subsec:contributions}, this is important in the design setting because it is not clear how to enforce the sub-Gaussianity property when finding an optimal design. We bypass this problem via Poisson sampling along with a novel proof technique that does not rely on the sub-Gaussianity of the design.

\begin{theorem} \label{thm:debiasing}
Suppose that $\mat X = (X_i^T)_1^K$ is a Poisson sample according to a distribution $w^\star$ on $(x_i)_1^N$ with $\E K = n$ and that $(y_i, X_i)_1^K$ follow \ref{model1} with a $s$-sparse regression parameter $\beta^\star$. Also assume that the following conditions are satisfied:
\begin{enumerate}
\item $\|x_i\|_\infty \leq M < \infty$.
\item $\lambda_\star I \preccurlyeq \Sigma := \sum_1^N w_i^\star x_i x_i^T \preccurlyeq  \lambda^\star I$ where $0< \lambda_\star, \lambda^\star < \infty$ do not depend on $n$.
\item The noise terms $\varepsilon_j$ are iid mean-zero sub-Gaussian random variables with $\| \varepsilon_j \|_{\psi_2} \leq \sigma_\varepsilon$ and a variance $ \E \varepsilon_n^2$ that is bounded away from $0$ and $\infty$.
\item $s \log^{\frac{3}{2}}(p)= o(\sqrt{n})$.
\end{enumerate}

Then the debiased estimate (\ref{deb_lasso}) satisfies 
\begin{align*}
    \sqrt{n}(\hat{\gamma} - \gamma) = \frac{1}{\sqrt{n}} c^T \Sigma^{-1} \mat{X}^T\varepsilon + b_n
\end{align*}
where 
\begin{itemize}
    \item (Bias Bound) with probability $1 - o(1)$, we have
    \begin{align*}
        |b_n| \lesssim \frac{M \sigma_\varepsilon \sqrt{c^T \Sigma^{-1} c}}{\lambda_*} \cdot \frac{s \log^{\frac{3}{2}}(p)}{\sqrt{n}}.
    \end{align*}
    \item (Asymptotic Normality) Let $v^2 := \E\varepsilon_n^2 \cdot c^T \Sigma^{-1} \hat{\Sigma} \Sigma^{-1} c$ where $\hat{\Sigma} = n^{-1} \sum_1^K X_i X_i^T$. Then:
    \begin{align*}
        \frac{1}{v \sqrt{n}} c^T \Sigma^{-1}\mathbb{X}^T\varepsilon  \rightarrow_d N(0, 1).
    \end{align*}
    \item (Variance Approximation) The variance of the noise term can be approximated (asymptotically) by $c^T \Sigma^{-1} c$:
    \begin{align*}
    \frac{c^T \Sigma^{-1} \hat{\Sigma} \Sigma^{-1} c}{ c^T \Sigma^{-1} c} \rightarrow_p 1.
    \end{align*}
\end{itemize}
\end{theorem}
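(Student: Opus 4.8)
The plan is to reduce the statement to the algebraic identity behind the claimed decomposition and then to a handful of concentration estimates for the Poisson–sampled Gram matrix $\hat\Sigma = n^{-1}\sum_{l=1}^N N_l x_l x_l^T$, where the $N_l \sim \mathrm{Poisson}(nw_l^\star)$ are \emph{independent}. Substituting $Y = \mat X\beta^\star + \varepsilon$ into (\ref{deb_lasso}), using $\hat\Sigma = n^{-1}\mat X^T\mat X$ and $u = \Sigma^{-1}c$, one obtains
\[
\sqrt n(\hat\gamma - \gamma) = \frac{1}{\sqrt n}\,c^T\Sigma^{-1}\mat X^T\varepsilon + b_n, \qquad b_n := \sqrt n\,\big((\Sigma - \hat\Sigma)\Sigma^{-1}c\big)^T(\hat\beta - \beta^\star),
\]
because $c - \hat\Sigma u = (\Sigma - \hat\Sigma)\Sigma^{-1}c$. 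So it remains to bound $b_n$, to prove a central limit theorem for the linear‑in‑$\varepsilon$ term, and to show the quadratic form $c^T\Sigma^{-1}\hat\Sigma\Sigma^{-1}c$ concentrates around its mean $c^T\Sigma^{-1}c$.

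For the bias, Hölder gives $|b_n| \le \sqrt n\,\|(\hat\Sigma - \Sigma)\Sigma^{-1}c\|_\infty\,\|\hat\beta - \beta^\star\|_1$. The $\ell_1$‑rate of the weighted lasso follows from a restricted‑eigenvalue argument: the weights $\widehat{W}_j = (\hat\Sigma_{jj})^{1/2}$ lie, on a high‑probability event, in a fixed interval bounded away from $0$; the empirical‑process term obeys $\|n^{-1}\mat X^T\varepsilon\|_\infty \lesssim \sigma_\varepsilon\sqrt{\log p / n}$ by a conditional‑on‑$\mat X$ sub‑Gaussian bound; and the population RE condition (valid since $\Sigma \succcurlyeq \lambda_\star I$) transfers to $\hat\Sigma$ once $\|\hat\Sigma - \Sigma\|_{\max}$ is small — and here Poisson sampling is what makes the argument go through, since $\mathrm{e}_j^T(\hat\Sigma - \Sigma)\mathrm{e}_k = n^{-1}\sum_l (N_l - nw_l^\star)x_{lj}x_{lk}$ is a sum of \emph{independent} centered sub‑exponential variables, so Bernstein's inequality plus a union bound yield $\|\hat\Sigma - \Sigma\|_{\max} \lesssim M\sqrt{\log p / n}$, which is $o(1/s)$ by assumption (4); this gives $\|\hat\beta - \beta^\star\|_1 \lesssim s\lambda / \lambda_\star$. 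For the other factor, $\mathrm{e}_j^T(\hat\Sigma - \Sigma)\Sigma^{-1}c = n^{-1}\sum_l (N_l - nw_l^\star)x_{lj}(x_l^T\Sigma^{-1}c)$ is again a sum of independent centered sub‑exponentials whose variance is at most $n^{-1}M^2 (\Sigma^{-1}c)^T\Sigma(\Sigma^{-1}c) = n^{-1}M^2 c^T\Sigma^{-1}c$, so Bernstein and a union bound over the $p$ coordinates give $\|(\hat\Sigma - \Sigma)\Sigma^{-1}c\|_\infty \lesssim M\sqrt{c^T\Sigma^{-1}c\,\log p/n}$ (the sub‑exponential correction being lower order under (4)). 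Combining the two bounds and multiplying by $\sqrt n$ reproduces the stated bias bound, the remaining logarithmic factor coming from the RE‑transfer step; assumption (4) then makes $b_n = o_p(1)$.

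For the variance approximation, write $c^T\Sigma^{-1}\hat\Sigma\Sigma^{-1}c = v^T\hat\Sigma v$ with $v := \Sigma^{-1}c$; its mean is $v^T\Sigma v = c^T\Sigma^{-1}c$, and the deviation $v^T\hat\Sigma v - v^T\Sigma v = n^{-1}\sum_l (N_l - nw_l^\star)(x_l^Tv)^2$ is once more a sum of independent centered variables, whose variance $n^{-1}\sum_l w_l^\star (x_l^Tv)^4$ is controlled using $\sum_l w_l^\star(x_l^Tv)^2 = v^T\Sigma v$ together with assumptions (1)--(2), so Chebyshev gives $v^T\hat\Sigma v / v^T\Sigma v \rightarrow_p 1$; in particular $\sum_{i=1}^K (c^T\Sigma^{-1}X_i)^2 = n\,v^T\hat\Sigma v \asymp n\,c^T\Sigma^{-1}c$. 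For asymptotic normality, condition on $\mat X$ and write $c^T\Sigma^{-1}\mat X^T\varepsilon = \sum_{i=1}^K a_i\varepsilon_i$ with $a_i = c^T\Sigma^{-1}X_i$, a conditional sum of independent mean‑zero terms with total variance $\E\varepsilon_n^2\sum_i a_i^2 = nv^2$. I would verify the Lindeberg--Feller condition conditionally: since the $\varepsilon_i$ are sub‑Gaussian, $\E[\varepsilon_i^2\,\mathbf{1}\{|a_i\varepsilon_i| > \delta\sqrt{nv^2}\}]$ decays faster than any power of $nv^2/(a_i^2\sigma_\varepsilon^2)$, so it suffices to show $\max_{i\le K}a_i^2 = o_p(\sum_{i\le K}a_i^2)$; this uses that a design point with $(c^T\Sigma^{-1}x_l)^2$ large relative to $c^T\Sigma^{-1}c$ must carry small weight $w_l^\star$ (because $\sum_l w_l^\star(c^T\Sigma^{-1}x_l)^2 = c^T\Sigma^{-1}c$) and is thus unlikely to be sampled, whereas points sampled many times have $(c^T\Sigma^{-1}x_l)^2 \lesssim c^T\Sigma^{-1}c/w_l^\star$, comparable to a single summand of $\sum_i a_i^2 \asymp n\,c^T\Sigma^{-1}c$. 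The resulting conditional CLT gives $\E[e^{\mathrm{i}t\,c^T\Sigma^{-1}\mat X^T\varepsilon/(v\sqrt n)}\mid\mat X]\rightarrow_p e^{-t^2/2}$, and bounded convergence promotes this to $\rightarrow_d N(0,1)$ unconditionally.

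The main obstacle is precisely the package of concentration statements for $\hat\Sigma$ and for the linear/quadratic functionals $(\hat\Sigma - \Sigma)\Sigma^{-1}c$ and $(\Sigma^{-1}c)^T\hat\Sigma(\Sigma^{-1}c)$ \emph{without} the row‑wise sub‑Gaussianity on which standard matrix‑Bernstein and covering arguments rely — and which, as explained in subsection~\ref{subsec:contributions}, is incompatible with finitely supported designs. The device that unlocks the argument is the replacement of multinomial by Poisson sampling, which decouples the counts $N_l$ and lets one apply scalar Bernstein inequalities coordinatewise; extracting the sharp $\sqrt{c^T\Sigma^{-1}c}$ scaling in the bias (rather than a crude $\|\Sigma^{-1}c\|_2$ or $\|\Sigma^{-1}c\|_1$ factor) and the correct \emph{relative}‑error rate in the variance approximation, together with the control of $\max_i a_i^2 / \sum_i a_i^2$ needed for the CLT, is the delicate part of the proof.
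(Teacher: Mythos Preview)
Your overall architecture matches the paper's: exploit the independence of the Poisson counts $N_l$ to run scalar Bernstein inequalities coordinatewise, combine an $\ell_1$--$\ell_\infty$ bound with the lasso rate for the bias, and verify a conditional Lindeberg condition for the CLT. Two corrections are needed, one minor and one substantive.

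\textbf{The extra $\sqrt{\log p}$.} The claim that ``the sub-exponential correction [is] lower order under (4)'' is wrong, and the extra half-power of $\log p$ does \emph{not} come from the RE transfer. In the Bernstein bound for $\mathrm{e}_j^T(\hat\Sigma-\Sigma)\Sigma^{-1}c = n^{-1}\sum_l(N_l-nw_l^\star)x_{lj}(x_l^T\Sigma^{-1}c)$, one has both $\sum_l\|V_l\|_{\psi_1}^2 \le M^2c^T\Sigma^{-1}c/n$ and $\max_l\|V_l\|_{\psi_1} \le M\sqrt{c^T\Sigma^{-1}c/n}$ (since $w_l^\star(c^T\Sigma^{-1}x_l)^2 \le c^T\Sigma^{-1}c$). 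These two scales coincide, so the exponential tail dominates and the union bound over $j$ forces $\|(\hat\Sigma-\Sigma)\Sigma^{-1}c\|_\infty \lesssim M\,\log(p)\sqrt{c^T\Sigma^{-1}c/n}$, not $M\sqrt{\log(p)\,c^T\Sigma^{-1}c/n}$. Multiplying by the lasso $\ell_1$ rate $s\sigma_\varepsilon\lambda_\star^{-1}\sqrt{\log p/n}$ (which carries no extra log once RE holds with constant $\gtrsim\sqrt{\lambda_\star}$) gives exactly $s\log^{3/2}(p)/\sqrt n$. Your RE transfer via $\|\hat\Sigma-\Sigma\|_{\max}$ is a legitimate elementary substitute for the paper's Rudelson--Zhou route, but it contributes no logarithmic factor of its own.

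\textbf{Variance approximation and Lindeberg.} Here there is a real gap. For Chebyshev to give $v^T\hat\Sigma v/(v^T\Sigma v)\to_p 1$ you need $\sum_l w_l^\star(x_l^Tv)^4 = o\big(n\,(v^T\Sigma v)^2\big)$, and no direct use of $\|x_l\|_\infty\le M$ and $\lambda_\star I\preccurlyeq\Sigma\preccurlyeq\lambda^\star I$ delivers this: bounding $(x_l^Tv)^2$ by $M^2\|v\|_1^2$ or $\|x_l\|_2^2\|v\|_2^2$ loses a factor of $p$, which swamps $n$. The same obstruction blocks your Lindeberg step, where your heuristic (``large $(c^T\Sigma^{-1}x_l)^2$ implies small $w_l^\star$'') yields only $\max_i a_i^2 \lesssim \sum_i a_i^2$, not $o_p(\sum_i a_i^2)$. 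The paper does not argue from assumptions (1)--(2) directly; it changes variables to $d_l=\Sigma^{-1/2}x_l$ so that $\sum_l w_l^\star d_ld_l^T = I_p$, shows via a Lagrange-multiplier computation that in the case $N=p$ the $d_l$ are in fact orthogonal with $\|d_l\|_2^2 = 1/w_l^\star$, and uses this to bound $\max_{\|u\|_2=1}\sum_l w_l^\star(d_l^Tu)^{2k}$ (with $k=2$ for the variance step and $k=3$ for Lindeberg). The case $N>p$ is then reduced to $N=p$ by orthogonally extending the $d_l$ to $\mathbf R^N$ while preserving the isotropy identity. That algebraic argument, not a generic concentration bound, is the missing ingredient in your sketch.
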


\noindent\textbf{Discussion. }
1. The first assumption imposes a uniform bound on the coordinates of all design points. Note that we can permit $M$ to depend on $n,p,N$, and thus slowly grow with $n$, provided the sparsity constraint (in the fourth assumption) is modified accordingly. For example, if the experimental domain $(x_i)_1^N$ is itself a sample from a population such that all coordinates $x_{ij}$ are sub-Gaussian with $ \|x_{ij}\|_{\psi_2} = \mathcal{O}(1)$, then it is well-known that $\max_{i,j} |x_{ij}| \lesssim \sqrt{ \log(Np) } $ with probability $1 - o(1)$. Thus in this case we could set $M \asymp \sqrt{\log(Np)}$ and replace assumption 4 with $s \log^{3/2}(p) \sqrt{\log(Np)} = o(\sqrt{n})$.

2. The second assumption in Theorem \ref{thm:debiasing} is used in our analysis to ensure that the restricted eigenvalue condition is satisfied (with a non-negligible restricted eigenvalue) for the sample design matrix with high probability, thereby guaranteeing the fast rate of convergence for the lasso estimator. Furthermore, the semidefinite nature of this constraint leads to computationally tractable optimization problems as opposed to constraints directly involving restricted eigenvalues which are themselves NP-hard to compute in general \citep{dobriban2016regularity}. Also, note that this condition implies that $c \in \mathrm{span}(\{ x_i : \, w_i^\star > 0 \})$, which is a necessary condition for unbiasedness in the low-dimensional setting.
As illustrated in the following example, in the absence of this assumption the population covariance matrix $\Sigma$ resulting from $P1$ can have a vanishing restricted eigenvalue, leading to the inconsistency of the de-biased estimator.

3. The third assumption regarding the sub-Gaussianity of the noise term is standard in the literature on inference for high-dimensional linear models and is assumed to allow concentration arguments.

4. The last assumption imposes a constraint on the sparsity of the regression parameter $\beta^\star$ in terms of $n,p$. Note that this is a stronger condition than the  ``ultra-sparsity'' condition assumed in high-dimensional inference\citep{vandegeer2014,zhang2014confidence,javanmard2014confidence}. The extra factor of $\sqrt{\log(p)}$ in this assumption is the price we pay for dispensing with the sub-Gaussianity of covariate vectors (as is typically assumed in the literatture) and only assuming uniformly bounded entries. 

\begin{example}
Suppose that $N = p = 2n$ are natural numbers and let $(x_i)_1^N$ be an orthogonal basis of $R^p$ that satisfies the following conditions:
\begin{itemize}
\item $\| x_i \|_\infty \leq M$ for some $ M > 0$ not depending on $n,p$.
\item The first entry of $x_i$ is given by
\begin{align*}
x_{i1} = \begin{cases}
0 & 1 \leq i \leq n \\
\sqrt{2} & n + 1 \leq i \leq p.
\end{cases}
\end{align*}
\item The $x_i$'s are orthogonal: $p^{-1} \sum_{i=1}^N x_i x_i^T = I_p$.
\end{itemize}
An explicit construction of such a basis is given in section \ref{Appendix}. Define $c$ to be a scaled sum of the first $n+1$ of $x_i$'s:
\begin{align*}
c = \frac{1}{\sqrt{n+1}} \sum_{i=1}^{n + 1} x_i,
\end{align*}
so that we have $\| c \|_2 = \sqrt{p}$. It follows from Elfving's Theorem that the $c$-optimal design obtained from problem $P1$ in this case is characterized by
\begin{align*}
w_i^\star = \begin{cases}
\frac{1}{n+1} &: 1 \leq i \leq n + 1, \\
0 &: n + 1 < i \leq N.
\end{cases}
\end{align*}
It is clear that $w^\star$ leads to a singular population covariance matrix $\Sigma_{w^\star}$. Under this optimal design, with probability $(1 - 1/(n+1))^n \approx 1/e$ the design point $x_{n+1}$ is not among $X_1, \dots, X_n$, so that on this event (called $E$) we have $X_{i1} = 0$ for $1 \leq i \leq n$. It is clear that in this case we can not hope to estimate $\beta_1^\star$, while $c^T \beta^\star$ depends on $\beta_1^\star$. For a simple example, take $\beta^\star = (\theta \sqrt{(n+1)/2}, 0, \dots, 0)^T$, so that $\gamma = \langle c, \beta^\star \rangle = \theta $. Assuming a $N(0,1)$ noise distribution for $\varepsilon_i$, it is clear that on $E$ we have $y_i = \varepsilon_i$. Given $E$, we can show that $\hat{\beta} = 0_p$ is a solution of the lasso problem (\ref{lasso_def}) with probability $1 - o(1)$ (with $\lambda = \sqrt{(2+\eta)\log(p)/n}$ for a small $\eta > 0$; see Section \ref{Appendix} for a proof). A natural extension of the de-biased estimator (\ref{deb_lasso}) with $\Sigma_{w^\star}^+$ substituted for $\Sigma_{w^\star}^{-1}$ is given by
\begin{align*}
\hat{\gamma} &= \langle \hat{\beta}, c \rangle + \frac{1}{n} c^T \Sigma_{w^\star}^+ \mat{X}^T (Y - X \hat{\beta}).
\end{align*}
After conditioning on $E$ we have
\begin{align*}
\p( \hat{\gamma} = c^T \Sigma_{w^\star}^+ \mat{X}^T \varepsilon / n \mid E) \rightarrow 1.
\end{align*}
This implies that with probability $\p(E) \approx 1/e$, the de-biased estimator $\hat{\gamma}$ is not consistent. Finally, note that this result does not depend on the specific choice of the relaxed inverse used in place of $\Sigma_{w^\star}^{-1}$.

\end{example}

The reason for the bias in this example is that the restricted eigenvalue of $\hat{\Sigma}$ is zero with non-negligible probability. 
A simple solution to this problem is to constrain the population covariance matrix to have eigenvalues that are bounded away from zero.
This motivates considering designs where for pre-specified $0<\lambda_\star \leq \lambda^\star<\infty$ the covariance matrix is constrained to have eigenvalues that are bounded away from $0$ and $\infty$:
\begin{align}\label{problem:2}
\mathbf{P2 : }\min_{w \in \R^{N}}  \quad &c^T \Sigma^{-1} c\\[-5pt]
\text{s.t.}  \quad 
&\Sigma = \sum_{1 \leq i \leq N} w_i x_i x_i^T\\[-5pt]
& \lambda_\star I \preccurlyeq \Sigma \preccurlyeq  \lambda^\star I  \\[-5pt]
&\sum_{1 \leq i \leq N} w_i = 1, \quad w \geq 0.
\end{align}

While the original problem was equivalent to an LP, we show (in Proposition \ref{prop:SDP}) that the optimization problem $P2$ can be recast as the following semidefinite program (SDP):
\begin{align*}
\mathbf{P2^\prime : }\min_{t\in \R, w \in \R^N} \quad & t\\[-10pt]
\text{s.t.} \quad 
&\Sigma = \sum_{i=1}^N w_i x_i x_i^T, \quad \sum_{i=1}^N w_i = 1\\[-3pt]
& \lambda_\star I \preccurlyeq \Sigma \preccurlyeq  \lambda^\star I, \quad  \begin{bmatrix}
t & c^T \\
c & \Sigma
\end{bmatrix} \succcurlyeq 0\\[-5pt]
&w \geq 0.
\end{align*}

\begin{remark}[The advantanges of randomization.]
Randomized designs have important computational and statistical advantages in the high-dimensional setting. To see this, let us consider a natural notion of $c$-optimality for deterministic experiments. If $n_i\in \{0, 1, \dots, n\} $ is the number of times $x_i$ is repeated in the sample, then the variance of the de-biased lasso estimator is approximately equal to 
\begin{align*}
\frac{1}{n} c^T \hat{\Omega} \hat{ \Sigma} \hat{\Omega}^T c,
\end{align*}
where $\hat{\Sigma} = n^{-1} \sum_1^N n_i x_i x_i^T$ and $\hat{\Omega}$ is a relaxed inverse of $\hat{\Sigma}$ that is typically required to satisfy $\| c^T \hat{\Omega} \hat{\Sigma} - c^T \|_\infty \lesssim \sqrt{\log(p)/n}$ to ensure that the bias the of de-biased lasso estimator remains $o(1/\sqrt{n})$. It is then natural to attempt to minimize the the above variance subject to the aforementioned constraint (see for example a similar formulation for $D$-optimality in the work of \cite{huang2020optimal}):
\begin{align*}
\min_{\stackrel{(n_i)_1^N \in [n]^N} {\hat{\Omega} \in \mathbf{R}^{p\times p}}} &c^T \hat{\Omega} \hat{ \Sigma} \hat{\Omega} c \\[-10pt]
s.t.\quad & \| c^T \hat{\Omega} \hat{\Sigma} - c^T \|_\infty \lesssim \sqrt{\frac{\log(p)}{n}}\\[-10pt]
& \hat{\Sigma} = \frac{1}{n} \sum_{i=1}^N n_i x_i x_i^T\\[-10pt]
& \sum_1^N n_i \leq n.
\end{align*}

Designs obtained using the above formulation suffer from two problems:
\begin{itemize}
\item \textbf{Computational feasibility. } The above is a non-convex problem in a high-dimensional space ($\mathbf{R}^{N + p^2}$) for which it can be very difficult to find global optima (even when we ignore the integer constraints on $n_i$). Randomization here allows to approximate $ c^T \hat{\Omega} \hat{ \Sigma} \hat{\Omega} c$ with $c^T \hat{\Omega} c$ for $\hat{\Omega} := \Sigma^{-1}$, which is a convex function of the design $(w_i)_1^N$.

\item \textbf{Statistical accuracy. } Even if a sufficiently good (locally) optimal point of the above problem is found, guaranteeing statistical accuracy for the resulting design may not be possible, as the restricted eigenvalue condition may not be satisfied for it. Note that verifying the RE condition is NP-hard \citep{dobriban2016regularity}, so it is not known how to constrain the design in the above problem to satisfy the RE condition. Randomization solves this problem by using linear matrix inequalities (LMI) on the population covariance matrix, casting the problem as a semidefinite problem. The RE condition then is guaranteed to hold with high probability for the sample design matrix.
\end{itemize}

\end{remark}

\section{Application}\label{sec:application}

\begin{figure}
\includegraphics[width=\linewidth]{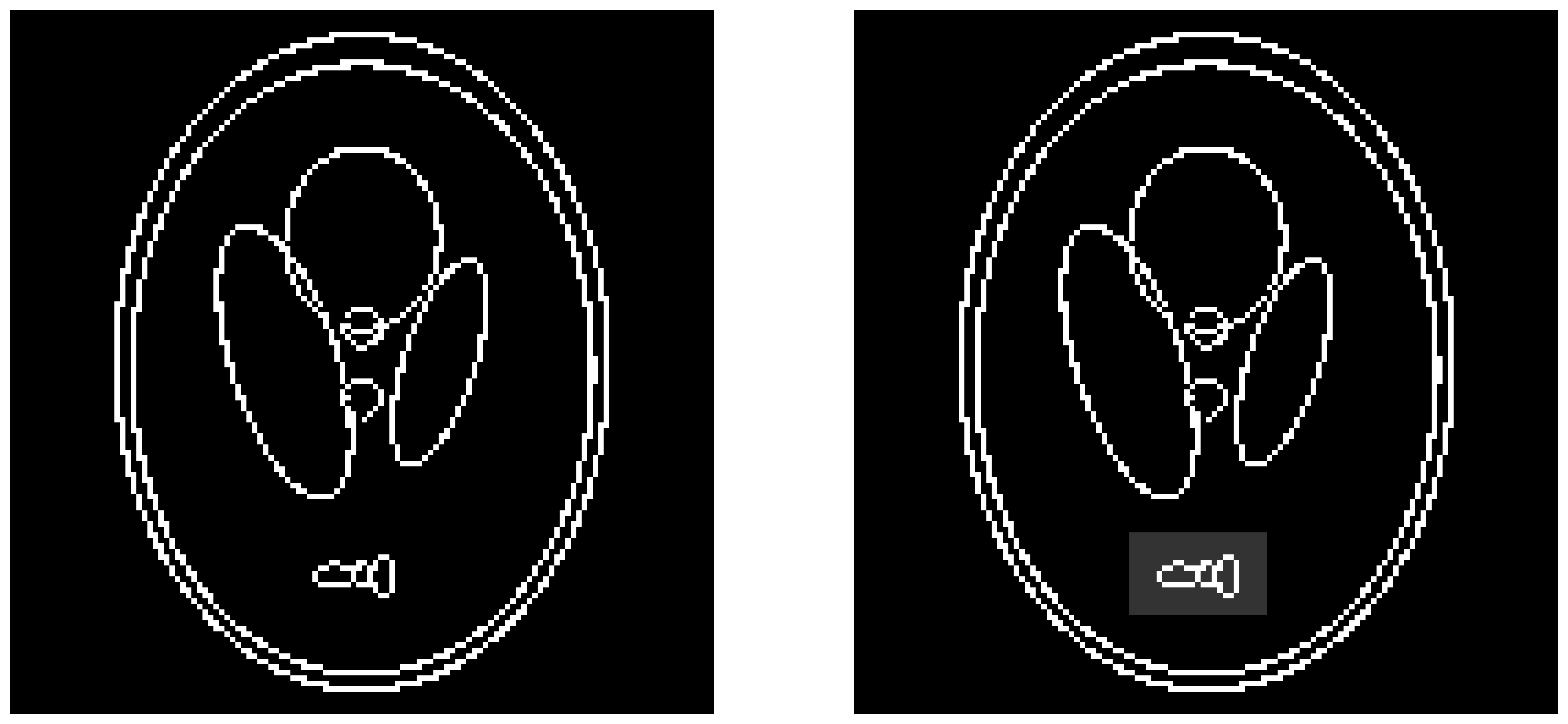}
\caption{The sparsified Shepp-Logan phantom as the underlying truth for our experiments (left).  The gray rectangle shows the region whose average intensity $(\gamma)$ is the target of inference.(right) }
\label{img:shepp_logan}
\end{figure}

Our example application is inspired by sparse magnetic resonance imaging (MRI) \citep{lustig2007sparse}. At a high level, the measurements in an MRI application form a noisy Fourier transform of an underlying quantity (typically proton density of water molecules in a cross-section of body):
\begin{align*}
y = \mathcal{F} \cdot \vect( \theta^\star)+ \varepsilon,
\end{align*}
where $\beta^\star := \vect(\theta^\star)$ is a vectorized (i.e. one-dimensional) representation of the two-dimensional underlying truth $\theta^\star$, and $\mathcal{F}$ is the matrix of the discrete Fourier transform (in the same basis that $\beta^\star$ is represented), and $\varepsilon$ is measurement noise taken to have a $N(0,1)$ distribution. For an $n \times p$ matrix $A$, the vectorized representation $\vect(A)$ is obtained by stacking the columns of $A$ in an $np$ vector as follows
\begin{align*}
\vect(A) = (A_{11}, \dots, A_{n1}, A_{12}, \dots, A_{n2}, \dots, A_{1p}, \dots, A_{np})^T.
\end{align*}
In practice, obtaining the MRI measurements is slow and time-consuming, a problem that has inspired a vast literature (under the umbrella of ``compressed sensing'') on methods allowing reduction of the number of such measurements while preserving the quality of the reconstructed image. If $\beta^\star$ is sparse, one of the most well-known methods of reconstruction is the Lasso
\begin{align*}
\hat{\beta} := \arg\min_{\beta'} \frac{1}{2n} \| y - \mathcal{F} \beta' \|_2^2 + \lambda \| \beta' \|_1.
\end{align*}
Often, $\beta^\star$ is sparse, not in the original basis, but rather in a different basis, e.g. the wavelet basis $W$. In this case one solves the transformed problem \citep{chen2001atomic}:
\begin{align*}
\hat{\beta} := \arg\min_{\beta'} \frac{1}{2n} \| y - \mathcal{F} \beta' \|_2^2 + \lambda \| W \beta' \|_1.
\end{align*}
Alternatively, if $\beta$ has a small total variation, for example if the image comprises large piece-wise constant parts, one can use a total variation penalty term \citep{rudin1992nonlinear}:
\begin{align*}
\hat{\beta} := \arg\min_{\beta'} \frac{1}{2n} \| y - \mathcal{F} \beta' \|_2^2 + \lambda \| \nabla \beta' \|_1.
\end{align*}
where $\nabla$ is a discrete gradient operator that yields sparse representations of the class of images under study.

\begin{figure}\label{img:hist_compare}
\centering
\includegraphics[width=0.9\linewidth]{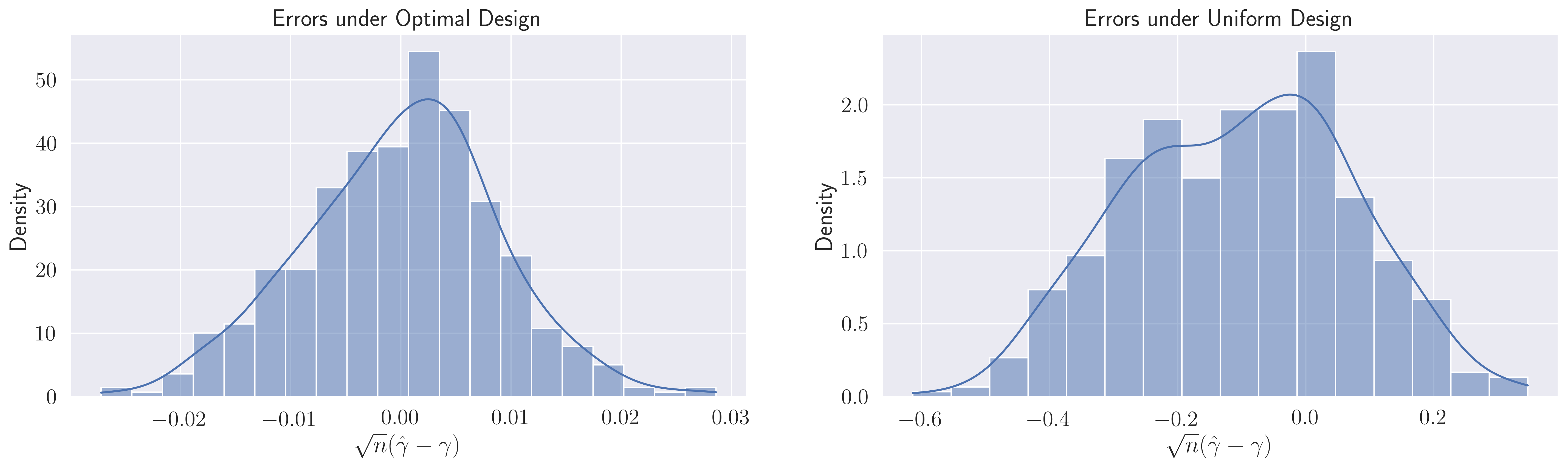}
\caption{Comparison of $\sqrt{n}$-scaled estimation errors $\sqrt{n}(\hat{\gamma} - \gamma)$ for the optimal and uniform designs }
\end{figure}

In order to make the discussion of our experiments more straightforward, we make the following simplifications:
\begin{itemize} 
\item We start with a sparse image (Figure \ref{img:shepp_logan}[left]), so that we do not need to take the sparsifying transforms (wavelet, gradient, etc.) into account. If we denote the original image (in our case, the Shepp-Logan phantom) by $\iota$, then the sparse image is taken to be the sign of the Laplacian of the original image, that is, $\theta^\star = \mathbf{1}(\Delta \iota > 0)$ where $\Delta \iota := \partial_x^2\iota + \partial_y^2\iota $ is the Laplacian operator  typically used for edge detection in image processing and where the $\partial_x, \partial_y$ are Sobel derivatives (see \cite[Chapter 10]{MISRA2020289} for details). In the resulting image only $6.3\%$ of the pixels are non-zero while in the original image $43\%$ are non-zero.
\item We use the discrete cosine transform (DCT) \citep{ahmed1974discrete} to avoid complex-valued Fourier measurements that are obtained in MRI applications. Alternatively, one could also consider the Fourier transform as pairs of cosine and sine transforms, resulting in a design matrix with twice as many rows as the (complex) Fourier transform matrix.
\end{itemize}

The number of pixels in the image on the left of Figure \ref{img:shepp_logan} is $128 \times 128$. Thus for this application $N = p = 128^2 = 16384$ and we set $n = N / 2$, so that we are under-sampling the DCT coefficients by $50\%$. The design matrix is the matrix of 2-dimensional discrete cosine transform of type 2, given by $X = \sqrt{N} \cdot (D \otimes D)^T$ where $\otimes$ denotes the Kronecker product\footnote{Here $D$ is the matrix of DCT (type 2) for 1-dimensional signals, and the Kronecker product computes the 2-dimensional DCT matrix for vectorized images.} and
\begin{align}\label{def:DCT}
D_{ij} = \begin{cases}
\frac{1}{\sqrt{m}}  & :\, i = 1\\
\sqrt{\frac{2}{m}} \cos \left( \frac{\pi (i - 1) (2j - 1)}{2 m} \right) & :\, 2 \leq i \leq m = \sqrt{p}.
\end{cases}
\end{align}
The scaling by $\sqrt{N}$ is done to ensure $X^T X / N = I_N$, meaning that under uniform sampling, the population design covariance is the identity matrix (and thus has well-behaved extreme eigenvalues).

The $c$ vector is taken to be $c = \vect(\tilde{c})$ where:
\begin{align*}
\tilde{c}_{ij} &= \begin{cases}
\frac{1}{375} &: \, 95 \leq i < 110 \text{ and } 50 \leq j < 75\\
0 &:\, \text{otherwise.}
\end{cases}
\end{align*}
This choice of $c$ results in $\gamma = \langle c, \beta^\star \rangle = \langle \tilde{c}, \theta^\star \rangle$ being the average intensity of the image on the support of $\tilde c$ (the gray rectangle in Figure \ref{img:shepp_logan}[right]), that is, $\gamma = \sum_{95 \leq i < 110} \sum_{50 \leq j < 75} \theta^\star_{ij} / 375$.
With the above choice of $c$, the parameter of interest $\gamma = \langle c, \beta^\star \rangle = 0.10667$. The optimal design is then obtained by solving the problem ${P1'}$.

Figure \ref{img:hist_compare} shows the histograms of estimation errors over 500 i.i.d. realizations of the estimates under uniform and optimal designs. The scaled bias and standard errors are presented in Table \ref{tab:bias_se}. As can be seen in the table, the optimal design leads to significant improvements in both bias and variance of the estimator. In practice, this translates to higher detection rates of weak signals in images and higher power for hypothesis tests.

\begin{table}
\centering
\begin{tabular}{ |c|c|c|c| } 
\hline
Design / Measure & $\sqrt{n} \times $ bias & $\sqrt{n} \times $ standard error \\
\hline
Optimal Design & $4.1564 \times 10^{-5}$ & 0.0088 \\ 
Uniform Design & -0.1042 & 0.1751 \\ 
\hline
\end{tabular}
\caption{Comparison of bias and standard error for the optimal and uniform designs}
\label{tab:bias_se}
\end{table}

\textbf{Simulated Data Example. }
In our second experiment, we compare the performance of the de-biased estimator under uniform and optimal design for an experimental domain generated from a multivariate normal distribution. For each combination of problem parameters, $N = 1000$ observations $(x_i)_1^N$ were drawn from $N(0_p, \Sigma_\kappa)$ where $(\Sigma_\kappa)_{ij} = \kappa^{|i - j|}$ and $\kappa \in \{ 0.1, 0.9 \}$ and $p = 500$. The regression parameter $\beta^\star$ with two sparsity levels $s \in \{ 5, 10 \}$ was taken to be
\begin{align*}
\beta^\star_j = \begin{cases}
\frac{s +1 - j}{s} &: j \leq s, \\
0 &: s < j \leq p.
\end{cases}
\end{align*}
Two different choice of $c \in \{ \mathrm{e}_s, \mathrm{e}_{s + 1} \}$ were considered that correspond to estimating $\gamma = \beta_s = 1/s$ and $\gamma = \beta_{s + 1} = 0$. The optimal design was found in each case using the linear program (\ref{linear_program}) (resulting in an optimal design with non-singular population covariance matrix). To approximate the distribution of the de-biased lasso estimator for each specification of the problem (i.e. choices of $\kappa$, $s$ and $c$), 500 Monte Carlo samples $\hat{\gamma}$ were generated as follows. Each time a sample $(X_i)_1^n$ of size $n = 200$ was drawn from $(x_i)_1^N$ according to the uniform or optimal design, the responses were generated according to $y_i = \langle \beta^\star, X_i \rangle + \varepsilon_i$ with $\varepsilon_i \stackrel{iid}{\sim} N(0,1)$, and finally the de-biased 
 $95\%$ confidence intervals were generated using
\begin{align*}
\hat{\gamma} \pm z_{0.025} \cdot \sqrt{\frac{ c^T \Sigma^{-1} \hat{\Sigma} \Sigma^{-1} c}{n}}
\end{align*}
with $z_{0.025}$ denoting the upper 2.5 percentile of $N(0,1)$ and where $\Sigma = N^{-1} \sum_1^N x_i x_i^T$ in the case of uniform design and $\Sigma = \sum_1^N w_i^\star x_i x_i^T$ under the optimal design $w^\star$. In the case $\gamma = \beta_s = s^{-1}$ (which we call the ``signal'' scenario), power was computed as the proportion of times (out of the 500 realizations) the $95\%$ confidence interval did not include zero. In the case $\gamma = \beta_{s + 1} = 0$ (which we call the ``noise'' scenario), the false positive rate is computed as the proportion of times (out of the 500 realizations) the $95\%$ confidence interval does not include zero. In both cases the coverage is the proportion of times the confidence interval covers the parameter of interest. The standard errors were computed as the sample standard deviation of the 500 realizations of $\hat{\gamma}$ in each case.

The results of the experiment are presented in Table \ref{gauss_tab_signal} and Table \ref{gauss_tab_noise} for the signal and noise scenarios, respectively. As can be seen in Table \ref{gauss_tab_signal}, the optimal design results in improved power while controlling the coverage close to the nominal level.

\begin{table}[t]
\centering
\begin{tabular}{|lcccc|} \hline
($\kappa$, s) & Design &  Power &   Coverage & $\sqrt{n} \times$Standard Error\\ \hline
\multirow{2}{*}{ (0.1, 5)   }  &  U&  0.496&   0.952 & 1.42\\
            &  O&  0.806&  0.928 & 1.00\\ \hline
\multirow{2}{*}{(0.1, 10)  } & U &  0.152&  0.936 & 1.49\\
                &O  & 0.29 &  0.926& 1.05\\ \hline
\multirow{2}{*}{(0.9, 5)   } & U &  0.096&  0.97 & 4.02\\
               &O  &0.156  & 0.938& 2.96 \\ \hline
\multirow{2}{*}{(0.9, 10) } &  U&  0.064&  0.938 & 4.46\\
                 &O  & 0.078 & 0.944 & 2.85\\ \hline
\end{tabular}
\caption{Performance of de-bisaed lasso under uniform and optimal designs for estimating $\gamma = \beta_s = s^{-1}$. $U$ and $O$ signify Uniform and Optimal design, respectively.  }\label{gauss_tab_signal}
\end{table}

\begin{table}[t]
\centering
\begin{tabular}{|lcccc|} \hline
($\kappa$, s) & Design &  False Positive Rate &   Coverage & $\sqrt{n} \times$Standard Error \\ \hline
\multirow{2}{*}{ (0.1, 5)   }  &  U&  0.054& 0.946 & 1.44\\ 
            &  O&  0.048& 0.952 & 0.97 \\ \hline
\multirow{2}{*}{(0.1, 10)  } & U &  0.058&  0.942 & 1.44\\
                &O  & 0.064 & 0.936  & 1.00\\ \hline
\multirow{2}{*}{(0.9, 5)   } & U &  0.048&  0.952 & 4.19\\ 
               &O  &0.042  & 0.958 & 2.84 \\ \hline
\multirow{2}{*}{(0.9, 10) } &  U&  0.046&  0.954 & 4.25\\ 
                 &O  & 0.056 & 0.944 & 2.91 \\ \hline
\end{tabular}
\caption{Performance of de-bisaed lasso under uniform and optimal designs for estimating $\gamma = \beta_{s+1} = 0$. $U$ and $O$ signify Uniform and Optimal design, respectively.  } \label{gauss_tab_noise}
\end{table}

\begin{remark}
For both of our experiments, we have used Problem $P1'$ to find the optimal design, which in both cases resulted in designs with non-singular population covariance matrices. In practice, for large-scale problems it is often much faster to solve the linear program $P1'$ versus the semidefinite program $P2$. Therefore one can first quickly solve $P1'$, and resort to $P2$ if the covariance matrix resulting from $P1'$ is (close to) singular.
\end{remark}

\section{Discussion and Future Work}
In this work we studied the problem of optimal design under the $c$-optimality criterion in the high-dimensional setting where $n < p$. We have proposed a semidefinite program that minimizes the variance of de-biased estimators while allowing the derivation of theoretical guarantees for the resulting design. We have shown that in contrast to the low-dimensional setting where the OLS estimator is unbiased, in the high-dimensional setting one needs to control the bias when minimizing the variance of de-biased estimators. The practical efficiency gains from these optimal designs are presented in simulation experiments and are quite impressive.

Our work focuses on the setting where $ p \leq N$. However the $N < p$ case can also arise in applications where the experimental domain is not as large and poses interesting challenges (such as the difficulty of enforcing RE conditions on the design as discussed in the introduction) for future work. Other directions for further research include the study of optimal designs for generalized linear models in a high-dimensional setting and under other optimality criteria than $c$-optimality. Yet another direction for extending our results is a study optimal designs for $A \beta$ for a matrix $A \in \R^{q \times p}$ with $q$ a fixed integer.

\section{Proofs}\label{sec:proofs}
For completeness, we record here Elfving's theorem \citep{elfving1952optimum} which has been alluded to in the main text.

\begin{theorem}[Elfving (1952)]
Let $(x_i)_1^N \in \mathbf{R}^p$ be the available design points and let $c \in \mathbf{R}^p$. Define the Elfving set to be the convex hull of $(\pm x_i)_1^N$:
\begin{align*}
\mathcal{E} := \mathrm{conv}\left( \{ x_i : 1 \leq i \leq N \} \cup \{ -x_i : 1 \leq i \leq N \}  \right).
\end{align*}
Let $x_c$ be the point on the boundary of $\mathcal{E}$ that intersects the half-line  passing through the origin and $c$:
\begin{align*}
x_c = \partial \mathcal{E} \cap \{ tc : t \geq 0\}.
\end{align*}
If we write $x_c = \sum_{i=1}^N v_i x_i$, then the $c$-optimal design is given by $w_i^\star := |v_i| / \sum_1^N |v_j|$.

\end{theorem}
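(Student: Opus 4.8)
The plan is to derive Elfving's theorem from the equivalence between the design problem $\mathbf{P1}$ and the $\ell_1$ minimization problem $\mathbf{P1'}$ already recorded in the text, together with a geometric identification of the data of $\mathbf{P1'}$ with the Elfving set. Write $\mu^\star := \min\{\|b\|_1 : \sum_{i} b_i x_i = c\}$ for the optimal value of $\mathbf{P1'}$, and let $t^\star \ge 0$ be the scalar with $x_c = t^\star c$, i.e. $t^\star = \max\{t \ge 0 : tc \in \mathcal{E}\}$. The two things I need are (a) the optimal value of $\mathbf{P1}$ equals $(\mu^\star)^2$, attained at $w_i^\star = |b_i^\star|/\mu^\star$ where $b^\star$ is optimal for $\mathbf{P1'}$ (this is the precise reading of the informal ``$w^\star = b^\star/\|b^\star\|_1$'', since $\Sigma(w) := \sum_i w_i x_i x_i^T$ depends only on $|b_i|$), and (b) $t^\star = 1/\mu^\star$ with $x_c = \sum_i v_i x_i$ forced to satisfy $\|v\|_1 = 1$, so that $b^\star = v/t^\star$ and hence $w_i^\star = |v_i| = |v_i|/\sum_j |v_j|$.

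For step (a) I would prove the two inequalities by Cauchy--Schwarz. For ``$\le$'': given $b^\star$ optimal for $\mathbf{P1'}$, set $w_i = |b_i^\star|/\mu^\star$; then $c = \sum_i b_i^\star x_i \in \mathrm{range}(\Sigma(w))$, so $c^T \Sigma(w)^+ c = \sup_{u:\Sigma(w)u \ne 0}(c^T u)^2/(u^T\Sigma(w)u)$, and for every $u$,
\begin{align*}
(c^T u)^2 = \Big(\sum_i \sqrt{|b_i^\star|}\,\mathrm{sgn}(b_i^\star)\sqrt{|b_i^\star|}\,x_i^T u\Big)^2 \le \Big(\sum_i |b_i^\star|\Big)\Big(\sum_i |b_i^\star|(x_i^T u)^2\Big) = (\mu^\star)^2\, u^T\Sigma(w)u ,
\end{align*}
so $c^T\Sigma(w)^+ c \le (\mu^\star)^2$. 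For ``$\ge$'': given any feasible $w$ with $c \in \mathrm{range}(\Sigma(w))$, put $d := \Sigma(w)^+ c$ so that $c = \sum_i w_i (x_i^T d)\, x_i$; then $b_i := w_i(x_i^T d)$ is feasible for $\mathbf{P1'}$ and $\mu^\star \le \|b\|_1 = \sum_i w_i |x_i^T d| \le (\sum_i w_i)^{1/2}(d^T\Sigma(w)d)^{1/2} = (c^T\Sigma(w)^+ c)^{1/2}$, using $\sum_i w_i = 1$ and $d^T\Sigma(w)d = c^T\Sigma(w)^+ c$.

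For step (b) I would first observe $\mathcal{E} = \{\sum_i b_i x_i : \|b\|_1 \le 1\}$: one inclusion is immediate from the definition of $\mathrm{conv}(\{\pm x_i\})$, and the reverse follows from convexity of $\mathcal{E}$ and $0 \in \mathcal{E}$. Hence $tc \in \mathcal{E}$ iff there is $b$ with $\|b\|_1 \le t$ and $\sum_i b_i x_i = c$, i.e. iff $t \ge \mu^\star$, giving $t^\star = 1/\mu^\star$; moreover any representation $x_c = \sum_i v_i x_i$ of the boundary point must have $\|v\|_1 = 1$, since $\|v\|_1 < 1$ would put $x_c/\|v\|_1 \in \mathcal{E}$, contradicting maximality of $t^\star$. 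Thus $b^\star := v/t^\star = \mu^\star v$ is feasible for $\mathbf{P1'}$ with $\|b^\star\|_1 = \mu^\star$, hence optimal, and by step (a) the $c$-optimal design is $w_i^\star = |b_i^\star|/\mu^\star = |v_i|/\sum_j|v_j|$.

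The main obstacle I anticipate is the bookkeeping around the pseudo-inverse: one must track that $c \in \mathrm{range}(\Sigma(w))$ throughout (otherwise $c^T\Sigma(w)^+ c$ is to be read as $+\infty$ and such $w$ are not competitive), justify the generalized Rayleigh-quotient identity on $\mathrm{range}(\Sigma(w))$, and handle the degenerate cases $c = 0$ (trivial) and $c \notin \mathrm{span}(x_1,\dots,x_N)$ (where $\mathbf{P1'}$ is infeasible and no design gives finite variance), as well as confirm $x_c \ne 0$ so that $t^\star$ is positive and well defined. Beyond these points the argument is just Cauchy--Schwarz in the two complementary directions plus the elementary description of $\mathcal{E}$ as the image of the $\ell_1$ ball.
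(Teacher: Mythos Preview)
The paper does not actually prove Elfving's theorem; it merely records the statement ``for completeness'' and cites \cite{elfving1952optimum}. So there is no paper proof to compare against, and your proposal stands on its own merits.

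Your approach is sound and is essentially the standard one: reduce $\mathbf{P1}$ to $\mathbf{P1'}$ via the two Cauchy--Schwarz inequalities, then identify $\mathbf{P1'}$ geometrically with the Elfving set through $\mathcal{E} = \{\sum_i b_i x_i : \|b\|_1 \le 1\}$. Two small slips are worth fixing. First, in step~(b) the chain ``$tc \in \mathcal{E}$ iff there is $b$ with $\|b\|_1 \le t$ \dots\ iff $t \ge \mu^\star$'' has the inequality inverted: rescaling $v \mapsto b = v/t$ gives $\|b\|_1 \le 1/t$, hence $tc \in \mathcal{E}$ iff $\mu^\star \le 1/t$, i.e.\ $t \le 1/\mu^\star$, which is what yields $t^\star = 1/\mu^\star$ as you correctly conclude. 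Second, the claim that \emph{any} representation $x_c = \sum_i v_i x_i$ has $\|v\|_1 = 1$ is too strong when the $x_i$ are linearly dependent; your argument only rules out $\|v\|_1 < 1$. What you actually need (and what the theorem implicitly intends) is that there \emph{exists} such a representation with $\|v\|_1 = 1$, namely the one coming from the convex-hull description of $\partial\mathcal{E}$, and for that representation the rest of your argument goes through. With those two corrections the proof is complete.
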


\begin{definition}[Restricted Eigenvalue Condition]\label{RE-def}
A matrix $A$ is said to satisfy the restricted eigenvalue condition  $RE(s_0, k_0, A)$with parameter $\lambda_{RE}$ if
\begin{align*}
\lambda_{RE} := \quad \min_{\stackrel{J \subset \{ 1, \dots, P\}}{|J| \leq s_0}}  \min_{ \stackrel{ \|v_{J^c} \|_1 \leq k_0 \| v _J \|_1}{v \neq 0} } \frac{\|Av \|_2}{\|v_J\|_2} > 0.
\end{align*}
We also denote this quantity by $\lambda_{RE}(s_0, k_0, A)$.
\end{definition}

We record here a theorem by \cite{rudelson2012reconstruction} that relates the restricted eigenvalues of random matrices to the (restricted) eigenvalues of the corresponding population covariance matrices. In the theorem the smallest $k$-sparse eigenvalue of $A$ is defined as 
\begin{align*}
\rho_{\min}(k, A) = \min_{\stackrel{\| t \|_0 \leq k}{t \neq 0}} \frac{ \| At \|_2}{\| t\|_2}.
\end{align*}

\begin{theorem}[Rudelson \& Zhou, Theorem 8]\label{theorem:RZ}
Let $0 < \delta < 1$ and $ 0 < s_0 < p$. Let $X \in \mathbf{R}^p$ be a random vector such that $\| X \|_\infty \leq M$ a.s. and denote $\Sigma = \E X X^T$. Let $\mathbb{X}$ be an $n \times p$ matrix whose rows $X_1, X_2, \dots, X_n$ are independent copies of $X$. Let $\Sigma$ satisfy the $RE(s_0, 3k_0,  \Sigma^{\frac{1}{2}})$ condition as in Definition \ref{RE-def}. Define
\begin{align*}
d = s_0 \left( 1 + \max_j \|\Sigma^{\frac{1}{2}} e_j\|_2^2 \frac{ 16 (3k_0)^2 (3k_0+1) }{\delta^2 \cdot \lambda^2_{RE}(s_0, k_0, \Sigma^{\frac{1}{2}})}   \right).
\end{align*}
Assume that $d \leq p$ and $\rho = \rho_{\min}(d, \Sigma^{\frac{1}{2}}) > 0$. Assume that the sample size $n$ satisfies
\begin{align*}
n \geq n_0 := \frac{C_{RZ}M^2 d \cdot \log p }{\rho^2 \delta^2} \cdot \log^3\left( \frac{ C_{RZ}M^2 d \cdot \log p}{ \rho^2 \delta^2} \right),
\end{align*} 
for an absolute constant $C_{RZ}$.
Then with probability at least $1 - \exp\left( -\delta \rho^2 n / (6M^2d) \right)$, the $RE(s_0, k_0, \mathbb{X}/\sqrt{n})$ condition holds for matrix $\mat{X} / \sqrt{n}$ with $ \lambda_{RE}(s_0, k_0, \mathbb{X}/\sqrt{n}) \geq   (1 - \delta) \cdot \lambda_{RE}(s_0, k_0, \Sigma^{\frac{1}{2}})$.

\end{theorem}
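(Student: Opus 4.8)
The plan is to transfer the restricted eigenvalue condition from $\Sigma^{1/2}$ to $\mathbb{X}/\sqrt n$ by controlling, uniformly over the restricted-eigenvalue cone, how far the empirical quadratic form $v \mapsto n^{-1}\|\mathbb{X}v\|_2^2$ deviates from its population counterpart $v \mapsto v^T\Sigma v = \|\Sigma^{1/2}v\|_2^2$. Writing the cone as $\mathcal{C} = \{ v \neq 0 : \exists\, J,\ |J| \leq s_0,\ \|v_{J^c}\|_1 \leq k_0 \|v_J\|_1 \}$, the goal reduces to showing that, with the stated probability,
\begin{align*}
\inf_{v \in \mathcal{C}} \frac{\|\mathbb{X}v\|_2 / \sqrt n}{\|\Sigma^{1/2} v\|_2} \geq 1 - \delta,
\end{align*}
since the population inequality $\|\Sigma^{1/2}v\|_2 \geq \lambda_{RE}(s_0,k_0,\Sigma^{1/2})\|v_J\|_2$ then immediately delivers $\lambda_{RE}(s_0,k_0,\mathbb{X}/\sqrt n) \geq (1-\delta)\lambda_{RE}(s_0,k_0,\Sigma^{1/2})$.

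First I would reduce this infimum over the (non-compact, high-dimensional) cone to a statement about vectors that are effectively $d$-sparse, either via a block/shifting decomposition (sort the coordinates of $v$ by magnitude, split into successive blocks of size $d$, and bound the $\Sigma$-mass of the tail blocks by the head using the cone inequality $\|v_{J^c}\|_1 \le k_0\|v_J\|_1$) or by embedding $\mathcal{C}$, after $\Sigma$-normalization, into a scaled $\ell_1$-ball intersected with the $\Sigma$-sphere. Either route shows that it suffices to establish a sparse restricted-isometry estimate
\begin{align*}
\Delta_d := \sup_{\substack{v \neq 0,\ \|v\|_0 \leq d}} \left| \frac{\|\mathbb{X}v\|_2^2/n - v^T\Sigma v}{v^T\Sigma v} \right| \leq \delta'
\end{align*}
for a $\delta'$ comparable to $\delta$. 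The precise value of $d$ in the statement — $s_0$ inflated by the factor involving $\max_j\|\Sigma^{1/2}e_j\|_2^2$, $k_0$, $\delta$, and $\lambda_{RE}^2$ — is exactly the effective sparsity at which the cone's tail contribution becomes negligible, so this reduction dictates the formula for $d$.

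The technical core is bounding $\Delta_d$, a sparse-RIP statement for a matrix whose rows are merely bounded ($\|X\|_\infty \le M$) rather than sub-Gaussian. I would proceed by a three-stage empirical-process argument: (i) symmetrize, replacing $\Delta_d$ by a Rademacher chaos $n^{-1}\sup_{v}\bigl|\sum_i \eta_i \langle X_i, v\rangle^2\bigr|$ over the $d$-sparse $\Sigma$-unit sphere, with $(\eta_i)$ Rademacher signs; (ii) bound its expectation by a chaining/Dudley argument over the sparse set, whose metric entropy carries $\binom{p}{d}$-many $\epsilon$-nets and produces the $\log p$ factor, while the bounded (relative to sub-Gaussian, heavy-tailed) increments force a truncation step and the Rudelson–Vershynin majorizing-measure estimate, which is the source of the polylogarithmic $\log^3(\cdot)$ factor, yielding $\E\Delta_d \lesssim M\sqrt{d\log p \,\log^3(\cdot)/(\rho^2 n)} + (\text{lower order})$, so that requiring this to be $\lesssim \delta$ is precisely the sample-size condition $n \ge n_0$; (iii) upgrade this expectation bound to a high-probability statement via Talagrand's concentration inequality for suprema of bounded empirical processes. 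The key role of $\rho = \rho_{\min}(d,\Sigma^{1/2})$ is that on $d$-sparse $\Sigma$-unit vectors one has $\|v\|_2 \le 1/\rho$, hence $|\langle X_i, v\rangle| \le M\|v\|_1 \le M\sqrt d/\rho$, giving uniformly bounded increments of order $M^2 d/\rho^2$; this same bound both underlies the chaining rate and, fed into Talagrand's inequality, produces the exponential tail $\exp(-\delta\rho^2 n/(6M^2 d))$ in the statement.

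The main obstacle is stage (ii): because the design vectors are only $\ell_\infty$-bounded, the sub-Gaussian chaining that would give a clean $\sqrt{d\log p/n}$ rate is unavailable, and one must instead run the Rudelson–Vershynin truncation-and-chaining argument, carefully tracking how $M$, the sparse conditioning $\rho$, and the net cardinalities interact to yield the polylogarithmic overhead. Closing the loop then requires constant bookkeeping so that the $\delta'$ of the sparse reduction and the chaining constants can be chosen to produce exactly the RE constant $(1-\delta)$ and the stated probability; this constant-chasing, rather than any further new idea, is where the remaining work concentrates.
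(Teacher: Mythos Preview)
The paper does not prove this theorem. It is explicitly recorded as a quotation of an external result --- Theorem 8 of \cite{rudelson2012reconstruction} --- introduced with ``We record here a theorem by \cite{rudelson2012reconstruction} \ldots'', and is then used as a black box inside the proof of Proposition~\ref{rudelson_poisson}. There is therefore no ``paper's own proof'' to compare your proposal against.

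That said, your outline is a faithful high-level sketch of the Rudelson--Zhou argument itself: the reduction of the RE cone to an effectively $d$-sparse set (which is where the formula for $d$ comes from), the symmetrization-plus-chaining control of the sparse RIP quantity $\Delta_d$ with bounded rather than sub-Gaussian rows (producing the $\log^3$ factor), and the upgrade to high probability via Talagrand-type concentration using the envelope bound $|\langle X_i,v\rangle| \le M\sqrt{d}/\rho$. If the intent of the exercise was to reconstruct the proof of the cited theorem, your plan is on the right track; if the intent was to match what \emph{this} paper does, the correct answer is simply that the statement is cited without proof.
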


Note that this theorem concerns observations obtained via i.i.d sampling. The next proposition shows that a similar result holds for Poisson sampling as used in our work. Since the usual Lasso guarantees require $RE(s_0, k_0 = 3, \mathbb{X}  / \sqrt{n})$, we will be using the above theorem with $k_0 = 3$.

\begin{proposition}\label{rudelson_poisson}
Suppose that $K_1, \dots, K_N$ are independent Poisson random variables with $K_j \sim \operatorname{Poisson}(n w_j)$ and $\sum_1^N w_j = 1$, so that $K := \sum_1^N K_j \sim \operatorname{Poisson}(n)$. Let $\mathbb{X}$ be a $K \times p$ matrix where $x_j^T$ is repeated in the rows of $\mathbb{X}$ exactly $K_j$ times. Suppose that
 \begin{itemize}

\item The population covariance matrix $\Sigma = \sum_{j=1}^N w_j x_j x_j^T$ satisfies
\begin{align*}
\lambda_\star \leq \lambda_{\min}(\Sigma) \leq \lambda_{\max}(\Sigma) \leq \lambda^\star.
\end{align*}

  \item The expected sample size $n$ satisfies
\begin{align*}
n \geq \frac{5}{4} \tilde{n}_0 \quad &\text{ where } \tilde{n}_0 =  \frac{\tilde{C} M^2  \lambda^\star s_0 \log p}{\lambda_\star^2}\log^3 \left( \frac{\tilde{C} M^2  \lambda^\star s_0 \log p}{\lambda_\star^2} \right),\\
&\text{ and } \tilde{C} = 4 \times 51841 C_{RZ}.
\end{align*}

\end{itemize}
Then with probability at least $1 - e^{-\frac{\tilde{n}_0}{4}} - e^{-\frac{n_0 \lambda_\star}{12M^2 d}}$ we have $\lambda_{RE}(s_0, 3, \mathbb{X}/\sqrt{K}) \geq \lambda_{RE}(s_0, 3, \Sigma^{\frac{1}{2}}) / 2$.
\end{proposition}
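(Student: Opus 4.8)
The plan is to reduce the claim to the i.i.d.\ statement of Theorem~\ref{theorem:RZ} by conditioning on the random total sample size $K$. The crucial structural fact is the Poissonization identity: conditionally on $\{K=k\}$ the count vector $(K_1,\dots,K_N)$ is $\mathrm{Multinomial}\bigl(k,(w_j)_1^N\bigr)$, so the multiset of rows of $\mathbb{X}$ has exactly the law of $k$ i.i.d.\ draws from the distribution $\p(X=x_j)=w_j$, which has $\|X\|_\infty\le M$ almost surely and $\E X X^T=\Sigma$. Since $\lambda_{RE}(s_0,3,A/\sqrt{k})$ depends on a matrix $A$ only through $A^TA$, hence only through the multiset of its rows, the conditional law of $\lambda_{RE}(s_0,3,\mathbb{X}/\sqrt{K})$ given $\{K=k\}$ coincides with that for an i.i.d.\ design of size $k$; on this event Theorem~\ref{theorem:RZ} applies verbatim (with its $n$ replaced by $k$, and its conclusion phrased for $\mathbb{X}/\sqrt{k}=\mathbb{X}/\sqrt{K}$).

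Concretely I would proceed in three steps. First, I would \emph{calibrate the constants}: apply Theorem~\ref{theorem:RZ} with $\delta=\tfrac12$ and $k_0=3$, and use $\lambda_\star I\preccurlyeq\Sigma\preccurlyeq\lambda^\star I$ to replace the RZ quantities by $\lambda_\star,\lambda^\star$. This gives $\lambda_{RE}(s_0,9,\Sigma^{1/2})\ge\sqrt{\lambda_\star}>0$ (so the hypothesis $RE(s_0,3k_0,\Sigma^{1/2})$ holds), $\rho:=\rho_{\min}(d,\Sigma^{1/2})\ge\sqrt{\lambda_\star}>0$, and $\max_j\|\Sigma^{1/2}e_j\|_2^2=\max_j\Sigma_{jj}\le\lambda^\star$; substituting these, together with $16(3k_0)^2(3k_0+1)\delta^{-2}=51840$, into the definitions of $d$ and $n_0$ yields $d\le 51841\,s_0\lambda^\star/\lambda_\star$ and then $n_0\le\tilde{n}_0$ with $\tilde{C}=4\times 51841\,C_{RZ}$ (using that $x\mapsto x\log^3 x$ is increasing); here $n_0,d$ denote the Theorem~\ref{theorem:RZ} quantities for $\delta=\tfrac12,k_0=3$, and $d\le p$ is assumed as in that theorem. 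Second, I would show $K$ is \emph{large enough}: since $K\sim\mathrm{Poisson}(n)$ and $n\ge\tfrac54\tilde{n}_0\ge\tfrac54 n_0$, a Chernoff bound for the Poisson distribution gives $\p(K<\tilde{n}_0)\le\p\bigl(K<\tfrac45 n\bigr)\le e^{-\tilde{n}_0/4}$, so $\{K\ge\tilde{n}_0\}\subseteq\{K\ge n_0\}$ has probability at least $1-e^{-\tilde{n}_0/4}$. Third, I would \emph{condition and assemble}: on $\{K=k\}$ with $k\ge n_0$, Theorem~\ref{theorem:RZ} (with $1-\delta=\tfrac12$) gives
\begin{align*}
\p\bigl(\lambda_{RE}(s_0,3,\mathbb{X}/\sqrt{K})\ge\tfrac12\lambda_{RE}(s_0,3,\Sigma^{1/2})\mid K=k\bigr)\ \ge\ 1-\exp\bigl(-\delta\rho^2 k/(6M^2d)\bigr)\ \ge\ 1-\exp\bigl(-\lambda_\star n_0/(12M^2d)\bigr),
\end{align*}
where the last inequality uses $\delta=\tfrac12$, $\rho^2\ge\lambda_\star$ and $k\ge n_0$. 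Averaging over $\{K\ge\tilde{n}_0\}$ and combining with the Poisson tail bound,
\begin{align*}
\p\bigl(\lambda_{RE}(s_0,3,\mathbb{X}/\sqrt{K})\ge\tfrac12\lambda_{RE}(s_0,3,\Sigma^{1/2})\bigr)\ \ge\ \bigl(1-e^{-\lambda_\star n_0/(12M^2d)}\bigr)\,\p(K\ge\tilde{n}_0)\ \ge\ 1-e^{-\tilde{n}_0/4}-e^{-\lambda_\star n_0/(12M^2d)},
\end{align*}
which is the assertion.

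The main obstacle is not conceptual but one of bookkeeping: the reduction via conditioning on $K$ is routine once one observes that $\lambda_{RE}$ sees only the row multiset (so the Poissonized and the i.i.d.\ designs are exchangeable for this functional), but one must verify carefully that substituting $\delta=\tfrac12$, $k_0=3$ and the crude bounds $\lambda_{RE}^2,\rho^2\ge\lambda_\star$, $\Sigma_{jj}\le\lambda^\star$ into the rather intricate definitions of $d$ and $n_0$ in Theorem~\ref{theorem:RZ} really does collapse the sample-size threshold to $\tilde{n}_0$ with precisely the stated $\tilde{C}$, and that the Chernoff estimate for $K$ is compatible with the factor $\tfrac54$ in the hypothesis. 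A minor point to check along the way is $d\le p$, which is required to invoke Theorem~\ref{theorem:RZ} and is implicit in the proposition.
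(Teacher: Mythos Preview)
Your proposal is correct and follows essentially the same route as the paper's own proof: condition on $K$ to reduce Poisson sampling to i.i.d.\ sampling via the multinomial representation, calibrate Theorem~\ref{theorem:RZ} with $\delta=\tfrac12$ and $k_0=3$ using the crude bounds $\rho^2,\lambda_{RE}^2\ge\lambda_\star$ and $\Sigma_{jj}\le\lambda^\star$ to obtain $d\le 51841\,s_0\lambda^\star/\lambda_\star$ and $n_0\le\tilde{n}_0$, control $\p(K<\tilde{n}_0)$ by a Poisson tail estimate, and combine. The paper uses the elementary partial-sum bound $e^{-n}\sum_{j<\tilde{n}_0}n^j/j!\le e^{\tilde{n}_0-n}$ in place of your Chernoff bound, but otherwise the arguments are identical in structure and constants.
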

\begin{proof}
The basic idea is that conditioned on the total number of samples $K$, the conditional distribution of $(K_1, \dots, K_N)$ is multinomial with success probabilities $(w_1, \dots, w_N)$. Thus conditionally, the rows of $\mathbb{X}$ form an i.i.d. sample of the population $(x_i)_{i=1}^N$ with probabilities $(w_i)_{i=1}^N$. Therefore, the result of Theorem \ref{theorem:RZ} can be applied to get a lower bound on the restricted eigenvalue of $\mathbb{X} / \sqrt{K}$. For this, we first find upper bounds on $d,n_0$ and a lower bound on $\rho, \lambda_{RE}$ as needed in the theorem.

From the assumption on the spectrum of $\Sigma$ and the definitions of sparse and restricted eigenvalues it is clear that $\rho^2 \geq \lambda_\star$ and $\lambda_{RE}^2(s_0, 3, \Sigma^{\frac{1}{2}}) \geq \lambda_\star$. From these inequalities, and using $\delta = 1/2$ and $k_0 = 3$, we obtain an upper bound on $d$:
\begin{align*}
d &\leq s_0 \left( 1 + \frac{\lambda^\star}{\lambda_\star} 4 \cdot 64 \cdot 9^2 \cdot 10 \right)\\
& \leq 51841 \cdot s_0 \frac{\lambda^\star}{\lambda_\star} .
\end{align*}

Next, writing the $n_0$ in Theorem \ref{theorem:RZ} as $m_0  \log^3(m_0)$, we can bound $m_0$ by
\begin{align*}
m_0 &= \frac{C_{RZ}M^2 d \cdot \log p }{\rho^2 \delta^2}\\
&\leq \frac{4 \times 51841 C_{RZ} M^2  \lambda^\star s_0 \log p}{\lambda_\star^2} \\
& = \frac{\tilde{C} M^2  \lambda^\star s_0 \log p}{\lambda_\star^2}
\end{align*}
It follows that 
\begin{align*}
\tilde{n}_0 \geq  m_0 \log^3 m_0 = n_0.
\end{align*}

Next, we show that with high probability, the sample size $K$ is not smaller than $n_0$. We have
\begin{align*}
\p( K < n_0 ) \leq \p( K < \tilde{n}_0) = e^{-n} \sum_{j=0}^{\tilde{n}_0-1} \frac{n^j}{j!} \leq e^{\tilde{n}_0 - n} \leq e^{-\frac{\tilde{n}_0}{4}}.
\end{align*}

Now we proceed by conditioning on $K = k$ for $k \geq n_0$. Note that as mentioned before, given $K = k$, the rows of $\mathbb{X}$ have the same distribution as a weighted i.i.d. sample from $(x_i)_1^N$ with probabilities $(w_i)_1^N$, and since $ k \geq n_0$, by Theorem \ref{theorem:RZ} the probability that the $RE(s_0, 3, \mathbb{X}/\sqrt{k})$ does not hold is at most $\exp(-k \lambda_\star / (12 M^2 d))$. Denote by $B$ the event that $\lambda_{RE}(s_0, 3, \mathbb{X}/\sqrt{K}) < \lambda_{RE}(s_0, 3, \Sigma^{\frac{1}{2}}) / 2$. Then we have
\begin{align*}
\p\left( B \right) &\leq \p( K < n_0 ) + \p(B \cap [K \geq n_0]) \\
&\leq e^{-\frac{\tilde{n}_0}{4}} + \sum_{k=n_0}^\infty \p( B \mid K = k) \cdot \p( K = k) \\
&\leq e^{-\frac{\tilde{n}_0}{4}} + \sum_{k=n_0}^\infty \exp\left(-\frac{k \lambda_\star}{12M^2 d}\right) \cdot \p( K = k)\\
&\leq e^{-\frac{\tilde{n}_0}{4}} + e^{-\frac{n_0 \lambda_\star}{12M^2 d}}.
\end{align*}

\end{proof}

Next we record here a conditional central limit theorem due to \cite[Theorem 1 and Corollary 3]{Bulinski2017ConditionalCL} that will be used in the proof of theorem 1. Let  $\{U_{n,k}\}_{n,k}$ be a triangular array and $\A_n$ be the $\sigma$-algebra that can change with $n$. Denote by $\E^{\A_n}[\cdot] = \E[ \cdot \mid {\A_n}]$ the conditional expectation with respect to $\A_n$ and define
\begin{align*}
S_n := \sum_{i=1}^N U_{n,k} , \quad \quad (\sigma_n^{\A_n})^2 :=  \V^{\mathcal{A}_n} S_n =\E^{\A_n} (S_n - \E^{\A_n} S_n)^2
\end{align*}

\begin{theorem}[Theorem 1 and Corollary 3 of Bulinski, 2017]\label{bulinski}
Let $\{ U_{n,k} : k= 1, \dots, k_n, n \in \mathbf{N} \}$ be an array of random variables, which are $\mathcal{A}_n$-independent (i.e. independent given $\A_n$) in each row (for some $\sigma$-algbera $\mathcal{A}_n \subset \mathcal{F}$, where $n \in \mathbf{N}$), and $\V^{\mathcal{A}_n}( U_{n,k} ) < \infty$ (a.s.) for $ k = 1, \dots, k_n, n \in \mathbf{N}$. Assume that $(\sigma_n^{\mathcal{A}_n})^2 := \V^{\mathcal{A}_n} S_n> 0 $ (a.s.) for all n large enough. Then the two relations
\begin{align}
\max_{k=1, \dots, k_n} \frac{ \V^{\mathcal{A}_n }U_{n,k}}{ (\sigma_n^{\A_n})^2} \rightarrow_p 0
\end{align}
and
\begin{align*}
\E^{\A_n} \exp \left\{ it \frac{ S_n - \E^{\A_n} S_n }{ \sigma_n^{\A_n} } \right\} \rightarrow_p \exp\left\{ - \frac{t^2}{2}\right\} , \quad n \rightarrow \infty.
\end{align*}
hold if and only if the $\A_n$-Lindeberg condition is satisfied in a weak form: for any $t > 0$
\begin{align*}
 \quad T_n := \frac{1}{(\sigma_n^{\A})^2} \sum_{i = 1}^N \E^A \left[ (U_k - \E^\A U_k)^2 \mathbf{1} \{ |U_k - \E^\A U_k| > t \sigma_n^{\A}  \}  \right] \rightarrow_p 0.
\end{align*}
Furthermore, if the above $\A_n$-Lindeberg condition holds, then we have
\begin{align*}
\frac{S_n - \E^{\A_n} S_n}{ \sigma_n^{\A_n} } \rightarrow_d Z \sim N(0,1), \quad \text{ as } n \rightarrow \infty.
\end{align*}

\end{theorem}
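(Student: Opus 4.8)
The plan is to reproduce the Lindeberg--Feller argument conditionally on $\A_n$, working at the level of characteristic functions, and to obtain the converse half of the stated equivalence by a sub-subsequence reduction to the classical deterministic theorem. As a preliminary reduction I would replace $U_{n,k}$ by $(U_{n,k} - \E^{\A_n} U_{n,k})/\sigma_n^{\A_n}$, which is harmless since every hypothesis and conclusion is invariant under centering and rescaling by the positive $\A_n$-measurable quantity $\sigma_n^{\A_n}$. After this, $\E^{\A_n} U_{n,k} = 0$, the conditional variances $\sigma_{n,k}^2 := \V^{\A_n} U_{n,k}$ satisfy $\sum_{k=1}^{k_n}\sigma_{n,k}^2 = 1$, and, writing $\phi_{n,k}(t) := \E^{\A_n} e^{itU_{n,k}}$, the within-row $\A_n$-conditional independence gives $\E^{\A_n} e^{itS_n} = \prod_{k=1}^{k_n}\phi_{n,k}(t)$ a.s.

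For the implication from the conditional Lindeberg condition to both displayed relations and to the unconditional CLT: since $\prod_k e^{-t^2\sigma_{n,k}^2/2} = e^{-t^2/2}$ and $|\prod_k a_k - \prod_k b_k|\le\sum_k|a_k-b_k|$ for factors of modulus at most $1$, it is enough to control $\sum_k|\phi_{n,k}(t) - e^{-t^2\sigma_{n,k}^2/2}|$. The Taylor bounds $|e^{ix}-1-ix+\tfrac12 x^2|\le\min(x^2,\tfrac16|x|^3)$ and $|e^{-y}-1+y|\le y^2$ for $y\ge 0$, combined with $\E^{\A_n} U_{n,k}=0$ and a split at a truncation level $\tau>0$, yield
\[ \sum_k\bigl|\phi_{n,k}(t) - e^{-t^2\sigma_{n,k}^2/2}\bigr| \;\le\; \tfrac16|t|^3\tau + t^2 T_n(\tau) + \tfrac14 t^4\max_k\sigma_{n,k}^2, \]
where $T_n(\tau)$ denotes the conditional Lindeberg sum at level $\tau$. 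Moreover $\sigma_{n,k}^2\le\tau^2 + T_n(\tau)$ for every $k$ (split $\E^{\A_n} U_{n,k}^2$ at $\tau$ and bound the tail part by the whole sum), so the hypothesis $T_n(\tau)\rightarrow_p 0$ forces $\max_k\sigma_{n,k}^2\le\tau^2 + T_n(\tau)\rightarrow_p\tau^2$; letting $\tau\downarrow 0$ in both places gives the uniform-negligibility relation and $\prod_k\phi_{n,k}(t)\rightarrow_p e^{-t^2/2}$. For the unconditional CLT, take expectations: $\E e^{itS_n} = \E\bigl[\prod_k\phi_{n,k}(t)\bigr]$, the integrand is bounded by $1$ and converges in probability to $e^{-t^2/2}$, so dominated convergence gives $\E e^{itS_n}\to e^{-t^2/2}$ and L\'evy's continuity theorem yields $(S_n - \E^{\A_n} S_n)/\sigma_n^{\A_n}\rightarrow_d N(0,1)$.

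The converse --- that uniform negligibility together with conditional characteristic-function convergence implies the conditional Lindeberg condition --- is the main obstacle, and I would handle it by a sub-subsequence argument. Along any subsequence, extract a further subsequence on which the convergence in probability in (A), and the convergences in (B) for a fixed countable dense set of $t$, all hold almost surely; on the corresponding full-probability event, for each $\omega$ the array $(U_{n,k}(\omega))_k$ is a genuine deterministic triangular array of independent summands that is uniformly asymptotically negligible with total conditional variance identically $1$ and characteristic functions converging to $e^{-t^2/2}$, so the classical converse half of the Lindeberg--Feller theorem applies $\omega$-wise and forces the (now deterministic) Lindeberg quantity to vanish along that subsequence; hence $T_n(\tau)\rightarrow_p 0$ along it, and since every subsequence admits such a further subsequence, $T_n(\tau)\rightarrow_p 0$. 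The two delicate points are the measurability bookkeeping --- checking that $\sigma_n^{\A_n}$, the $\phi_{n,k}(t)$ and $T_n(\tau)$ are honest measurable random variables, and that a single null set works for all rational $t$ --- and the classical converse itself, whose proof extracts the Lindeberg condition from uniform negligibility and characteristic-function convergence through the standard truncation estimate comparing $\mathrm{Re}\bigl(1-\phi_{n,k}(t)\bigr)$ with truncated second moments and then integrating in $t$; the forward direction, by contrast, is a routine conditional transcription of the textbook proof.
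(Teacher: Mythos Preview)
The paper does not prove this theorem: it is quoted verbatim from \cite{Bulinski2017ConditionalCL} (their Theorem~1 and Corollary~3) and used as a black box in the proof of Theorem~\ref{thm:debiasing}. There is therefore no ``paper's own proof'' to compare your proposal against.

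That said, your sketch is a sound outline of how the result is actually established. The forward direction is the standard conditional transcription of the Lindeberg--Feller argument, and your Taylor/truncation bound and the consequence $\max_k\sigma_{n,k}^2\le\tau^2+T_n(\tau)$ are correct. Passing to the unconditional CLT by taking expectations and invoking bounded convergence is exactly the content of Bulinski's Corollary~3. For the converse, the sub-subsequence reduction to the classical Feller theorem is the right idea; the only points requiring care are the ones you flag yourself, namely that fixing $\omega$ really gives a deterministic triangular array of \emph{conditional laws} (not of random variables), so the $\omega$-wise application of Feller's theorem is at the level of distributions, and that a single exceptional null set must serve for a countable dense set of $t$'s (which suffices by continuity of $t\mapsto e^{-t^2/2}$ and equicontinuity of characteristic functions of variables with unit variance). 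With those caveats, your argument is essentially the one in Bulinski's paper.
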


\textbf{Proof of Theorem 1.} We present the proof in three parts:

\textbf{Part 1. (Bias Bound) }
First note that using weighted lasso with weights $\widehat{W}_j = \sum_i X_{ij}^2 / n$ is equivalent to normalizing the columns of $\mat{X}$ before applying the lasso. Furthermore, since $\E \widehat{W}_j^2 = \Sigma_{jj}$ and $0 < \lambda_\star \leq \Sigma_{jj} \leq \lambda^\star < \infty$ for all $j \in [p]$, the uniform boundedness of $X_{ij}$ and a standard concentration argument yields 
\begin{align*}
\max_{1 \leq j\leq p} \left| \frac{\widehat{W}_j^2}{\Sigma_{jj}} - 1 \right| \rightarrow_p 0
\end{align*}
Therefore with high probability the weights $\widehat{W}_j$ are bounded away from $0, \infty$ and thus the standard (unweighted) lasso guarantees apply (for details see \cite[Section 6.9]{buhlmann2011statistics}).
Next, observe that since $0 < \lambda_\star \leq \lambda_{\min}(\Sigma) \leq \lambda_{\max}(\Sigma) \leq \lambda^\star < \infty$ and because by assumption $\sqrt n \gg s \log^{3/2}(p)$, Proposition \ref{rudelson_poisson} guarantees that the scaled design matrix $\mat{X}/\sqrt{K}$ satisfies the RE condition with a restricted eigenvalue larger than $\sqrt{\lambda_\star}/2$ with probability $1 - o(1)$, where $K$ is the number of samples obtained via Poisson sampling. Calling this event $G$, we have $P(G) = 1 - o(1)$. Given $G$, we have\footnote{note that the lower bound $\sqrt{\lambda_\star}/2$ on the restricted eigenvalue is uniform over $G$}
\begin{align*}
\p\left( \| \hat{\beta} - \beta\|_1 \lesssim \frac{\sigma_\varepsilon s}{\lambda_*} \sqrt{\frac{\log(p)}{K}}  \,\middle\vert\, G \right) = 1 - o(1).
\end{align*}
 (for a proof see for example Theorem 6.1 and Corollary 6.6 of \cite{buhlmann2011statistics} for proofs).
 It follows that 
 \begin{align*}
 \p\left( \| \hat{\beta} - \beta\|_1 \lesssim \frac{\sigma_\varepsilon s}{\lambda_*} \sqrt{\frac{\log(p)}{K}} \right) &\geq \p\left( \| \hat{\beta} - \beta\|_1 \lesssim \frac{\sigma_\varepsilon s}{\lambda_*} \sqrt{\frac{\log(p)}{K}}  \,\middle\vert\, G \right) \cdot \p(G) \\
 &= (1 - o(1)) \cdot (1 - o(1))\\
 &= (1 - o(1)).
 \end{align*}
 Next, note that since $K / n \rightarrow_p 1$, we can substitute $n$ for $K$ in the above bound and write
 \begin{align}\label{lasso_bound}
  \p\left( \| \hat{\beta} - \beta^\star\|_1 \lesssim \frac{\sigma_\varepsilon s}{\lambda_*} \sqrt{\frac{\log(p)}{n}} \right) \rightarrow 1.
 \end{align}

Let $\hat{w}_i = N_i / n$ where $N_i$ are obtained using Poisson sampling. Then the debiased lasso estimator (defined in \ref{deb_lasso}) can be written as
\begin{align*}
    \hat{\gamma} &= \langle c, \hat{\beta} \rangle + u^T\hat{\Sigma} (\beta^\star - \hat{\beta}) + \frac{1}{n} u^T X^T \varepsilon\\
    &= \gamma + c^T(\Sigma^{-1}\hat{\Sigma} - I)(\beta^\star - \hat{\beta}) +\frac{1}{n} c^T\Sigma^{-1} X^T \varepsilon.
\end{align*}
Subtracting $\gamma$ from both sides and multiplying by $\sqrt{n}$ we obtain
\begin{align*}
    \sqrt{n}(\hat{\gamma} - \gamma) = \sqrt{n} c^T (\hat{\Sigma}^{-1}\hat{\Sigma} - I)(\beta^\star - \hat{\beta}) + \frac{1}{\sqrt{n}} c^T\Sigma^{-1} X^T \varepsilon.
\end{align*}
We show that the first term is $o_p(1)$.  Using an $\ell_1 - \ell_{\infty}$ bound and the error rate of the lasso estimate, with probability $1 - o(1)$ we have
\begin{align*}
    \sqrt{n} |c^T (\hat{\Sigma}^{-1}\hat{\Sigma} - I)(\beta^\star - \hat{\beta})|
    &\leq  \sqrt{n} \| c^T (\hat{\Sigma}^{-1}\hat{\Sigma} - I)\|_\infty \cdot \| \hat{\beta} - \beta^\star \|_1 
\end{align*}
Thus we need first to upper bound $ \| c^T (\hat{\Sigma}^{-1}\hat{\Sigma} - I)\|_\infty$. Let $\hat{w}_i = N_i / n$ and note that
\begin{align*}
    c^T (\Sigma^{-1} \hat{\Sigma} - I) = \sum_{i=1}^N (\hat{w}_i - w_i^\star) c^T\Sigma^{-1} x_i x_i^T.
\end{align*}
Recall that $N_i$ is a Poisson random variable with mean $n w_i^\star$, and therefore $N_i - n w_i^\star$ is subexponential with 
\begin{align*}
    \Ex \exp( t (N_i - n w_i^\star)) &= \exp( n w_i^\star (e^t - 1 - t))\\
    &\leq  \exp(n w_i^\star( e^t t^2 / 2 )\\
    &\leq \exp(n w_i^\star( t^2)) \quad \text{ for } |t| \leq \frac{1}{2}.
\end{align*}
This shows that $\| N_i - nw_i^\star \|_{\psi_1} \lesssim \sqrt{n w_i^\star}$ \citep[Proposition 2.7.1]{vershynin2018high} , and therefore, $\| \hat{w}_i - w_i^\star\|_{\psi_1} \lesssim \sqrt{ w_i^\star / n}$. Define $V_{ij} = (\hat{w}_i - w_i^\star) c^T \Sigma^{-1} x_i x_{ij}$. Using Bernstein's inequality for subexponential random variables \citep[Theorem 2.8.1]{vershynin2018high}, for some absolute constant $b>0$ and all $j = 1, \dots, p$ we have
\begin{align}\label{bernstein}
    \p \left( \left| \sum_{i=1}^N V_{ij} \right| > t \right) &\leq 2 \exp\left( -b \min \left\{  \frac{t^2}{\sum_{i=1}^N \|V_{ij}\|_{\psi_1}^2 }, \frac{t}{\max_i \|V_{ij}\|_{\psi_1}}\right\} \right).
\end{align}
Using the bound $\max_{i,j}|x_{ij}| \leq M$, we have 
\begin{align*}
\sum_i \| V_{ij} \|_{\psi_1}^2 &\leq \frac{M^2}{n} \sum_i w_i^\star (c^T \Sigma^{-1} x_i)^2\\
&= \frac{M^2}{n} \sum_i w_i^\star c^T \Sigma^{-1} x_i x_i^T \Sigma^{-1} c\\
&=  \frac{M^2}{n} c^T \Sigma^{-1}(\sum_i w_i^\star  x_i x_i^T )\Sigma^{-1} c\\
&= \frac{M^2}{n} c^T \Sigma^{-1} c.
\end{align*}
Similarly, 
\begin{align*}
\max_i \|V_{ij}\|_{\psi_1} \leq M \cdot \max_i \sqrt{w_i^\star / n} |c^T \Sigma^{-1} x_i| \leq M \sqrt{\frac{c^T \Sigma^{-1} c }{n}}. 
\end{align*}
Using these bounds and for $t = \sqrt{2\log(p) / (nb)}$ the Bernstein bound (\ref{bernstein}) implies
\begin{align*}
    \p \left( \left| \sum_{i=1}^N V_{ij} \right| > \frac{2M\log(p)}{b}\sqrt{\frac{ c^T \Sigma^{-1} c}{n}} \right) &\leq 2 \exp\left( - \min \left\{\frac{4\log^2(p)}{b}, 2\log(p)\right\} \right).
\end{align*}
For $p > \exp(b/2)$, the exponential tail prevailes, and we obtain
\begin{align*}
    \p \left( \left| \sum_{i=1}^N V_{ij} \right| > \frac{2M\log(p)}{b}\sqrt{\frac{ c^T \Sigma^{-1} c}{n}} \right) \leq 2 p^{-2}, \text{ for all } j = 1, \dots, p.
\end{align*}
A union bound over all $j=1, \dots, p$ now yields
\begin{align}\label{l_inf_bound}
    \p \left(\max_{1\leq j \leq p} \left| \sum_{i=1}^N V_{ij} \right| > \frac{2M\log(p)}{b}\sqrt{\frac{ c^T \Sigma^{-1} c}{n}} \right) \leq 2 p^{-1} \quad, \text{ for } p > e^{b/2}.
\end{align}
Continuing with the $\ell_1 - \ell_{\infty}$ bound and using the upper bound (\ref{l_inf_bound}) and the error rate of the lasso estimate (\ref{lasso_bound}), with probability $1 - o(1)$ we have
\begin{align*}
    |\sqrt{n} |c^T (\hat{\Sigma}^{-1}\hat{\Sigma} - I)(\beta^\star - \hat{\beta})|
    &\leq  \sqrt{n} \| c^T (\hat{\Sigma}^{-1}\hat{\Sigma} - I)\|_\infty \cdot \| \hat{\beta} - \beta^\star \|_1 \\
    &\lesssim \sqrt{n} \cdot \left(  M \log(p) \sqrt{\frac{ c^T \Sigma^{-1} c}{n}} \right) \cdot \frac{ \sigma_\varepsilon}{\lambda_\star} s \sqrt{\frac{\log(p)}{n}}\\
    &= \frac{M \sigma_\varepsilon \sqrt{c^T \Sigma^{-1} c}}{\lambda_\star} \cdot \frac{s \log^{\frac{3}{2}}(p)}{\sqrt{n}}.
\end{align*}

\textbf{Part 2.(Variance Approximation) }Next, we prove the third part of the theorem as the argument used here will be useful in the proof of asymptotic normality. The conditional variance of the noise term is
\begin{align*}
    \V(\frac{1}{\sqrt{n}} c^T\Sigma^{-1}X^T \varepsilon \mid X) &= c^T \Sigma^{-1} \hat{\Sigma} \Sigma^{-1} c.
\end{align*}
We show that this variance can be approximated by $c^T \Sigma^{-1} c$, i.e.
\begin{align*}
    \frac{c^T \Sigma^{-1} \hat{\Sigma} \Sigma^{-1} c}{c^T\Sigma^{-1}c} \rightarrow_p 1.
\end{align*}

We assume $N = |\{i : w_i \neq 0\}|$ as otherwise one can throw out the zero weights. We want to show:
\begin{align*}
    A := \frac{\sum_i w_i (c^T\Sigma^{-1}x_i)^4}{n (c^T \Sigma^{-1} c)^2} \leq \frac{1}{n}.
\end{align*}
Let $d_i = \Sigma^{-1/2} x_i$ and $v = \Sigma^{-1/2} c$. Then $A$ can be written as
\begin{align*}
    A = \frac{ \sum_i w_i (d_i^T v)^4}{n \| v \|_2^4}.
\end{align*}

\textbf{Step 1.} First suppose that $N = p$. We have
\begin{align*}
    \sum_i w_i d_i d_i^T = \Sigma^{-\frac 1 2} \left(\sum_i w_i x_i x_i^T \right) \Sigma^{- \frac 1 2} = \mathbf{I}_p.
\end{align*}

(This is true for $N > p$ too.) Let $d_j^\star$ be the projection of $d_j$ on the ortho-complement of the span of $\{ d_i \mid i \neq j \}$. Then multiplying both sides by $d_j^\star$ we obtain
\begin{align*}
    w_j (d_j^T d_j^\star ) d_j = d_j^\star.
\end{align*}
Note that $d_j^T d_j^\star \neq 0$ for all $j = 1, \dots, p$ as $d_1, \dots, d_p$ form a basis for $R^p$ (since $\Sigma$ is non-singular by construction.) From this equation it follows that $d_1, \dots, d_p$ are orthogonal, and after multiplying both sides by $d_j$ one also obtains $\|d_j\|_2^2 = w_j^{-1}$.

Now since $A$ does not depend on $\|v\|_2$, we have
\begin{align*}
    A \leq \max_{u \neq 0} \frac{ \sum_i w_i (d_i^T u)^4}{n \| u \|_2^4} = \frac{1}{n} \max_{\|u\|_2^2 = 1} \sum_i w_i (d_i^T u)^4.
\end{align*}
The Lagrangian for the last maximization problem is
\begin{align*}
    L(u, \lambda) = \sum_i w_i (d_i^T u)^4 - 2 \lambda ( u^T u - 1).
\end{align*}
Taking derivative w.r.t. $u$ and setting to zero yields
\begin{align}\label{lagrange}
    \sum_i w_i (d_i^T \hat{u})^3 d_i = \lambda \hat{u}.
\end{align}
Changing variables to $\tilde{u} = \sqrt{1/ \lambda} \hat{u}$, we can rewrite (\ref{lagrange}) as
\begin{align}
    \sum_i w_i (d_i^T \tilde{u})^3 d_i = \tilde{u}.
\end{align}
Multiplying on the left once by $\tilde{u}^T$ and once by $d_j^T$ and using $d_j^T d_j = w_j^{-1}$ gives 
\begin{align}\label{conseq}
    \sum_i w_i (d_i^T \tilde{u})^4 = \tilde{u}^T \tilde{u} \quad \text{ and } \quad (d_j^T \tilde{u} )^2 = 1.
\end{align}
Note that $\sum_i w_i (d_i^T u)^4 / \|u\|_2^4$ does not depend on the norm of $u$, so that any nonzero multiple of $\hat{u}$, and in particular $\tilde{u}$, is a maximizer. Plugging $\tilde{u}$ in this expression and using (\ref{conseq}) we obtain
\begin{align*}
    A  &\leq \frac{ \sum_i w_i (d_i^T \tilde{u})^4}{n (\tilde{u}^T \tilde{u})^2}\\
    &\leq \frac{1}{n \cdot \sum_i w_i (d_i^T \tilde{u})^4} = \frac{1}{n}.
\end{align*}
This finishes the proof for the $N = p$ case.

\textbf{Step 2.} Now consider the $N > p$ case. The idea is to reduce this case to the $N = p$ case by appropriately extending the length of $d_i,v \in R^p$ from $p$ to $N$.

Note that the identity $\sum_i w_i d_i d_i^T = \mathbf{I}_p$ is still valid. Define the vectors $\tilde{d}_i \in R^N$ by $\tilde{d}_i^T = (d_i^T, f_i^T)$ for some vectors $f_i \in R^{N - p}$ such that
\begin{align*}
    \sum_i w_i \tilde{d}_i \tilde{d}_i^T = \mathbf{I}_N.
\end{align*}
A construction of these $f_i$'s is given in the Appendix.

For any $u \in R^N $ use a similar decomposition $u^T = (u_1^T, u_2^T)$ with $u_1 \in R^p$ and $u_2 \in R^{N - p}$. Then
\begin{align*}
n \cdot A &\leq \max_{\|v\|_2^2 = 1} \sum_i w_i (d_i^T v)^4 \\
&= \max_{\substack{ \|u\|_2^2 = 1 \\ u_2 = 0 }} \sum_i w_i (\tilde{d}_i^T u)^4\\
& \leq \max_{\|u\|_2^2 = 1} \sum_i w_i (\tilde{d}_i^T u)^4\\
& \leq 1,
\end{align*}
where the last inequality follows from the argument in \textbf{Step 1} (since now $\tilde{p} := \operatorname{dim}(\tilde{d}_i) = N$). This finishes the proof of the $N > p$ case.

\textbf{Part 3. (Asymptotic Normality)} Before we establish asymptotic normality, let us introduce some notation. Let $\A$ be the $\sigma$-algebra generated by $(N_i)_{i=1}^N$ (note that we are suppressing the dependence of $\A = \A_n$ on $n$ to lighten notation). As in Theorem \ref{bulinski}, denote by $\E^\A[\cdot] = \E[ \cdot \mid \A]$ the conditional expectation with respect to $\A$ and define 
\begin{align*}
U_{n,k} &:=\begin{cases} \frac{1}{\sqrt{n}} c^T \Sigma^{-1} x_k ( \varepsilon^{(k)}_1 + \dots + \varepsilon^{(k)}_{N_k}), \quad  & N_k > 0 \\
0, &  N_k = 0.
\end{cases}\\
S_n &:= \sum_{i=1}^N U_{n,k}, \quad
(\sigma_n^\A)^2 := \E^\A (S_n - \E^\A S_n)^2,
\end{align*}
where $\varepsilon_i^{(j)} \sim \varepsilon_n$ are iid mean-zero random variables with variance equal to $\E \varepsilon_n^2 = \sigma^2$ and sub-Guassian norm $\| \varepsilon_i^{(j)} \|_{\psi_2} \leq \sigma_\varepsilon$. It follows that in general $\sigma \lesssim \sigma_\varepsilon$.
Observe that since $\E^\A S_n = 0$, we have
\begin{align*}
(\sigma_n^\A)^2 = \E^\A S_n^2 &= \frac{1}{n} \sum_{k=1}^N (c^T \Sigma^{-1} x_k)^2 \E[( \varepsilon^{(k)}_1 + \dots + \varepsilon^{(k)}_{N_k})^2 \mid N_k]\\
&= \sum_{k = 1}^N \left( \frac{N_k}{n} \right) (c^T \Sigma^{-1} x_i)^2 \sigma^2 \\
&= \sigma^2 \cdot  c^T \Sigma^{-1} \hat{\Sigma} \Sigma^{-1} c.
\end{align*}
Thus we want to show

\begin{align*}
\frac{c^T \Sigma^{-1} \mat X^T \varepsilon}{\sqrt{n \sigma^2 \cdot  c^T \Sigma^{-1} \hat{\Sigma} \Sigma^{-1} c }  } =\frac{S_n - \E^{\A_n} S_n}{ \sigma_n^{\A_n} } \rightarrow_d Z \sim N(0,1), \quad \text{ as } n \rightarrow \infty.
\end{align*}
In light of Theorem \ref{bulinski}, it suffices to prove the following conditional Lindeberg condition is satisfied:
\begin{align*}
\forall t > 0: \quad T_n := \frac{1}{(\sigma_n^{\A})^2} \sum_{i = 1}^N \E^A \left[ (U_k - \E^\A U_k)^2 \mathbf{1} \{ |U_k - \E^\A U_k| > t \sigma_n^{\A}  \}  \right] \rightarrow_p 0.
\end{align*}
Next we find appropriate upper bounds on the summands in the Lindeberg condition.  Also note that by assumption the noise variance $\sigma^2$ is bounded away from $0$ and $\infty$, we can assume without loss of generality that $\sigma^2 = 1$ in what follows.
 Using the Cauchy-Schawrz inequality, 
\begin{align*}
\E^\A [ U_{n,k}^2 \mathbf{1} \{ |U_{n,k}| > t \sigma_n^{\A}  \} ] &\leq \left( (\E U_{n,k}^4)   ( \E^\A \mathbf{1} \{ |U_{n,k}| > t \sigma_n^{\A}  \}  )  \right)^\frac{1}{2} \\
&\leq \left( (\E^\A U_{n,k}^4 ) \cdot \frac{ \E^\A U_{n,k}^2}{t^2 (\sigma_n^\A)^2} \right)^\frac 1 2.
\end{align*}
The fourth moment of $U_{n,k}$ can be bounded as follows
\begin{align*}
\E^\A U_{n,k}^4 &= \frac{1}{n^2} (c^T \Sigma^{-1} x_i)^4 \E^\A (\epsilon^{(k)}_1 + \dots + \varepsilon^{(k)}_{N_k})^4\\
&= \frac{1}{n^2} (c^T \Sigma^{-1} x_i)^4 \cdot ( N_k \E \varepsilon^4 + N_k (N_k-1) (\E \varepsilon^2)^2)\\
&\leq \frac{C}{n^2} (c^T \Sigma^{-1} x_i)^4 \cdot ( N_k^2 \sigma_\varepsilon^4),
\end{align*}
where the last inequality follows because $N_k \leq N_k^2$ and by sub-Gaussianity of $\varepsilon$ we have $\E \varepsilon^4 \leq C \cdot \sigma_\varepsilon^4$ for an absolute constant $C > 0$. Combined with the equality $\E^\A U_{n,k}^2 \lesssim N_k (c^T \Sigma^{-1} x_i)^2 \sigma_\varepsilon^2$, we find the upper bound

\begin{align*}
\E^\A [ U_{n,k}^2 \mathbf{1} \{ |U_{n,k}| > t \sigma_n^{\A}  \} ] &\leq \left( \frac{C}{t^2 (\sigma_n^\A)^2} \left( \frac{ N_k}{n}\right)^3 \cdot (c^T \Sigma^{-1} x_k)^6 \right)^\frac 1 2 \\
&\leq  \frac{1}{ \sigma_n^{\A}}  \left( \frac{C}{t^2} \frac{ \sum_{i=1}^N N_i}{n} \right)^\frac 1 2 \left( \frac{N_k}{n} \right) |c^T \Sigma^{-1} x_k |^3.
\end{align*}

Therefore the expression in the Lindeberg condition has the following upper bound:
\begin{align*}
T_n \leq   \left( \frac{C}{t^2} \frac{ \sum_{i=1}^N N_i}{n} \right)^\frac 1 2 \left( \frac{(c^T \Sigma^{-1} c)}{(\sigma_n^\A)^2} \right)^\frac 3 2  \frac{1}{(c^T \Sigma^{-1} c)^\frac 3 2} \sum_{k=1}^N |c^T \Sigma^{-1} x_k |^3 \hat{w}_k
 \end{align*}
 
Since, as shown before, $\sum_i N_i / n \rightarrow_p 1$ and $(\sigma_n^\A)^2 /  (c^T \Sigma^{-1} c) \rightarrow_p 1$, it suffices to show that 

\begin{align*}
\frac{1}{(c^T \Sigma^{-1} c)^\frac 3 2} \sum_{k=1}^N |c^T \Sigma^{-1} x_k |^3 \hat{w}_k \rightarrow_p 0.
\end{align*}
We will show the variance of this term converges to zero:
\begin{align*}
\frac{1}{n \cdot (c^T \Sigma^{-1} c)^3} \sum_{k=1}^N |c^T \Sigma^{-1} x_k |^6 w_k \rightarrow 0.
\end{align*}
A similar technique as before is applicable here. Let $d_k = \Sigma^{-\frac 1 2} x_k$ and $v = \Sigma^{-\frac 1 2} c$. The variance can be rewritten as
\begin{align*}
\frac{1}{n \| v\|_2^6} \sum_k w_k (d_k^T v)^6 &\leq \frac{1}{n} \cdot \max_{\|u\|_2^2 = 1}\left\{ \sum_k w_k (d_k^T u)^6 \right\} \\
&\leq \frac{1}{n} \rightarrow 0, \quad \text{ as } n \rightarrow \infty.
\end{align*}
where the last inequality follows because the value of the maximization problem is seen to be $1$ using a similar argument as the one used in Part 2 (Variance Approximation).

\begin{proposition}\label{prop:SDP}
Problem $P2$ can be recast as a semidefinite program (SDP).
\end{proposition}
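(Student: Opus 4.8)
The plan is to observe that in $P2$ every constraint is already an affine equality, an affine inequality, or a linear matrix inequality in the decision variable $w$ (using that $\Sigma = \sum_i w_i x_i x_i^T$ is linear in $w$), so the only ingredient that is not already in SDP form is the objective $c^T\Sigma^{-1}c$, which is not linear in $\Sigma$. I would linearize it by introducing an epigraph variable $t$ together with a Schur-complement linear matrix inequality, arriving at exactly the program $P2'$ stated in the text, and then verify that $P2$ and $P2'$ have the same feasible designs, the same optimal value, and the same optimizers.

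The key step is the Schur-complement equivalence. Because the lower linear matrix inequality $\lambda_\star I \preccurlyeq \Sigma$ forces $\Sigma \succ 0$, the matrix $\Sigma$ is invertible on the entire feasible set of $P2$, so the standard Schur-complement identity applies in its clean (non-pseudoinverse) form: for any $t \in \R$ and any feasible $\Sigma$,
\[
\begin{bmatrix} t & c^T \\ c & \Sigma \end{bmatrix} \succcurlyeq 0
\quad\Longleftrightarrow\quad
t - c^T \Sigma^{-1} c \ \geq\ 0 .
\]
Consequently, for a fixed feasible $w$ (equivalently a fixed feasible $\Sigma$), the smallest value of $t$ for which $(t,w)$ is feasible in $P2'$ is precisely $t = c^T \Sigma^{-1} c$.

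From this the equivalence follows by a routine two-way comparison. If $w^\star$ solves $P2$ with value $c^T \Sigma_{w^\star}^{-1} c$, then $(c^T \Sigma_{w^\star}^{-1} c,\, w^\star)$ is feasible in $P2'$ with the same objective value; conversely, if $(t^\star, w^\star)$ solves $P2'$, then $w^\star$ is feasible in $P2$ and $c^T \Sigma_{w^\star}^{-1} c \leq t^\star$. Hence the optimal values coincide and an optimal $w$ for either problem is optimal for the other. Finally, $P2'$ is a genuine SDP: the objective $t$ is linear in $(t,w)$; the relations $\Sigma = \sum_i w_i x_i x_i^T$ and $\sum_i w_i = 1$ are affine in $(t,w)$; $w \geq 0$ is a family of affine inequalities; and $\lambda_\star I \preccurlyeq \Sigma$, $\Sigma \preccurlyeq \lambda^\star I$, and the $(p+1)\times(p+1)$ block matrix $\succcurlyeq 0$ are all linear matrix inequalities in $(t,w)$.

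I do not expect a serious obstacle here; the only point requiring care is the one isolated above, namely that the lower spectral constraint guarantees invertibility of $\Sigma$, so that the Schur-complement characterization is exact and we never need a range condition such as $c \in \operatorname{range}(\Sigma)$ or a substitution of $\Sigma^{-1}$ by $\Sigma^{+}$. For completeness one should also remark that this recasting is only meaningful when $\lambda_\star,\lambda^\star$ are chosen so that $P2$ (equivalently $P2'$) is feasible, i.e. there exists a probability vector $w$ with $\lambda_\star I \preccurlyeq \sum_i w_i x_i x_i^T \preccurlyeq \lambda^\star I$.
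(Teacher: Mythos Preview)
Your proposal is correct and follows essentially the same route as the paper: introduce an epigraph variable $t$, invoke the Schur-complement equivalence (using that $\lambda_\star I \preccurlyeq \Sigma$ forces $\Sigma \succ 0$) to replace $c^T\Sigma^{-1}c \leq t$ by the $(p+1)\times(p+1)$ LMI, and arrive at $P2'$. The paper additionally writes out the explicit block-LDL$^T$ factorization to justify the Schur step, but otherwise the argument is the same; your version is in fact slightly more careful in spelling out the two-way equivalence of optimal values and optimizers.
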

\textbf{Proof.} We can write problem $\mathbf{P2}$ as
\begin{align*}
\mathbf{P2^\prime : }\min_{t\in \R, w \in \R^N} \quad & t\\
\text{s.t.} \quad 
&\Sigma = \sum_{i=1}^N w_i x_i x_i^T, \quad \sum_{i=1}^N w_i = 1\\
& \lambda_\star I \preccurlyeq \Sigma \preccurlyeq \lambda^\star I, \quad  c^T\Sigma^{-1}c \leq t\\
&w \geq 0.
\end{align*}
The constraint $c^T\Sigma^{-1}c \leq t$ is equivalent to 
\begin{align*}
\begin{bmatrix}
t & c^T \\
c & \Sigma
\end{bmatrix} \succcurlyeq 0
\end{align*}
since, given that $\Sigma$ is positive definite (guaranteed by the constraint $\Sigma - \alpha I \succcurlyeq 0$), the above matrix is positive semidefinite if and only if the Schur complement $t - c^T\Sigma^{-1}c$ is positive semidefinite, by the following decomposition:
\begin{align*}
\begin{bmatrix}
t & c^T \\
c & \Sigma
\end{bmatrix} = 
\begin{bmatrix}
1 & c^T\Sigma^{-1} \\
0 & I
\end{bmatrix} \cdot 
\begin{bmatrix}
t - c^T\Sigma^{-1}c & 0 \\
0 & \Sigma
\end{bmatrix} \cdot
\begin{bmatrix}
1 & c^T\Sigma^{-1} \\
0 & I
\end{bmatrix}^T.
\end{align*}

\section{Appendix}\label{Appendix}
 The following provides the details left out in example 1:\newline \newline
\textbf{Example 1 (continued). } First we show that on event $E$, the lasso estimate with the theoretical value of the tuning parameter $\lambda = \sqrt{(2+\eta)\log(p) / n}$ for some $\eta > 0$ vanishes with high probability. Let $L(\beta)$ be the objective of the weighted lasso defined in equation (\ref{lasso_def}) and $\widehat{W}$ be a diagonal matrix with $\widehat{W}_j$'s on its diagonal. For any $\beta$, on $E$ we have
\begin{align*}
L(\beta) - L(0_p) &= \frac{1}{2n} \| \varepsilon - \mat{X} \beta \|_2^2 + \lambda \sum_{j = 1}^p \widehat{W}_j |\beta_j| - \| \varepsilon \|_2^2\\
&= \| \mat{X} \beta \|_2^2 + \frac{-1}{n} \varepsilon^T \mat{X} \beta + \lambda \| \widehat{W} \beta \|_1 \\
&\geq \| \mat{X} \beta \|_2^2 +  \left( \lambda - \frac{\left\| \varepsilon^T X W^+\right\|_\infty }{n} \right) \cdot \| \widehat{W} \beta \|_1.
\end{align*}
A standard union argument shows that we have
\begin{align*}
\p( \lambda > \| \varepsilon^T \mat{X} W^+ \|_\infty / n) \rightarrow 1.
\end{align*}
This implies that with probability $ 1 - o(1)$ and for any $\beta$, we have $L(\beta) \geq L(0)$, so that $0_p \in \arg\min_\beta L(\beta)$. In fact, for all $j \in [p]$ we have $\widehat{W}_j \hat{\beta}_j = 0$.
\newline
\newline
 Next, we sketch an explicit construction of the experimental domain $(x_i)_1^N$ used in Example 1. We start with the matrix $D$ of the discrete cosine transform (DCT) defined by equation (\ref{def:DCT}). The matrix $B = \sqrt{N} D^T$ satisfies $B^T B = N\cdot I_p$ and $B_{i1} = 1$ for all $1 \leq i \leq N$ and $\max_{i,j} |B_{ij}| \leq \sqrt{2}$. Denote by $B_i$ the $i$-th row of $B$ and define
\begin{align*}
x_i = \begin{cases}
\frac{1}{\sqrt{2}} ( B_i + B_{i+1}) & : i \text{ is odd},\\
\frac{1}{\sqrt{2}} ( B_{i-1} - B_{i}) & : i \text{ is even}.
\end{cases}
\end{align*}
Then it is straightforward to check that $N^{-1} \sum_i x_i x_i^T = I_p$ and $\| x_i \|_\infty \leq 2$ for all $i \leq N$. Furthermore, we have 
\begin{align*}
x_{i1} = \begin{cases}
\sqrt{2} &: i \text{ is odd},\\
0 & :  i \text{ is even}.
\end{cases}
\end{align*}
\newline
\newline
\textbf{Constructing $f_i$'s } To construct the $f_i$'s alluded to in the proof of the second part of theorem \ref{thm:debiasing}, consider the following matrix (with the $f_i$'s to be specified shortly)
\begin{align*}
    D = \begin{pmatrix}
        d_1^T & f_1^T \\
        \vdots & \vdots \\
        d_N^T & f_N^T
    \end{pmatrix} = 
    \begin{pmatrix}
    \mid & \hdots & \mid\\
    g_1 & \hdots & g_N\\
    \mid & \hdots & \mid
    \end{pmatrix}
    \in R^{N \times N}.
\end{align*}
In our notation we write $d_i = (d_{ij})_{j=1}^p$ and $g_i = (g_ij)_{j=1}^N$, so that $d_{ij}$ is the $j$-th coordinate of $d_i$, etc. Then we know that  
\begin{align*}
\langle g_i, g_j \rangle_w := \sum_k w_k g_{ik}g_{jk} = \sum_k w_k d_{ki} d_{kj} = \left[\sum_k w_k d_k d_k^T \right]_{ij} = \delta_{ij}
\end{align*}
for $1 \leq i,j \leq p$. In other words, $\{g_1, \dots, g_p\}$ forms an orthonormal basis (w.r.t. $\langle \cdot, \cdot \rangle_w$) of its span. All we need is to choose $g_{p+1}, \dots, g_{N}$ in such a way that $\{g_1, \dots, g_N\}$ is an orthonormal basis of $R^N$ under $\langle \cdot, \cdot \rangle_w$, which is easy to construct using e.g. the Gram-Schmidt procedure.  This will ensure that $\langle g_i, g_j \rangle_w = \delta_{ij}$ for all $1 \leq i,j \leq N$.

With this choice, the $(i,j)$-th coordinate of $\sum_k w_k \tilde{d}_k \tilde{d}_k^T$ is given by
\begin{align*}
     \sum_k w_k [\tilde{d}_k \tilde{d}_k^T]_{ij} &= \sum_{k} w_k \tilde{d}_{ki} \tilde{d}_{kj} \\
     &= \sum_k w_k g_{ik} g_{jk}\\
     &= \langle g_i, g_j \rangle_w = \delta_{ij} \quad \text{ for all } 1 \leq i,j \leq N.
\end{align*}
Thus $\sum_k w_k \tilde{d}_k \tilde{d}_k^T = \mathbf{I}_N$.

\bibliographystyle{imsart-nameyear} 
\bibliography{biblio.bib}       


\end{document}